\newcommand{\ee}{\mathbb E}
\newcommand{\pp}{\mathbb P}
\newcommand{\rr}{\mathbb R}
\newcommand{\bz}{\boldsymbol z}
\newcommand{\bG}{\boldsymbol G}
\newtheorem{theorem}{Theorem}
\newtheorem{lemma}{Lemma}
\newtheorem{cor}{Corollary}
\newtheorem{remark}{Remark}
\newcommand{\proofname}{\noindent {\bf Proof}}
\newenvironment{proof}[1][\proofname]{\par \normalfont \trivlist
 \item[\hskip\labelsep\itshape #1]\ignorespaces
}{
 \hspace*{\fill}$\Box$ \endtrivlist
}
\definecolor{darkgreen}{rgb}{0,0.4,0}
\newcommand{\vc}[1]{{\boldsymbol #1}}
\newcommand{\sr}[1]{{\cal #1}}
\newcommand{\dd}[1]{\mathbb{#1}}
\newcommand{\eq}[1]{(\ref{eq:#1})}
\newcommand{\lem}[1]{Lemma~\ref{lem:#1}}
\newcommand{\cort}[1]{\ref{cor:#1}}
\newcommand{\thr}[1]{Theorem~\ref{thr:#1}}
\newcommand{\rem}[1]{Remark~\ref{rem:#1}}
\newcommand{\app}[1]{Appendix~\ref{app:#1}}
\newcommand{\sectn}[1]{Section~\ref{sect:#1}}
\begin{document}

\title{Queue-length balance equations in multiclass multiserver queues and their generalizations}

\author{Marko A.A. Boon\thanks{Department of Mathematics and Computer Science,
Eindhoven University of Technology, the Netherlands. \tt{m.a.a.boon@tue.nl}} \and
Onno J.\ Boxma \thanks{Department of Mathematics and Computer Science,
Eindhoven University of Technology, the Netherlands. \tt{o.j.boxma@tue.nl}}
\thanks{Partly funded by the NWO Gravity Project NETWORKS, Grant Number 024.002.003.}
\and Offer Kella\thanks{Department of Statistics, The Hebrew University of Jerusalem, Jerusalem 9190501, Israel. \tt{offer.kella@gmail.com}}
\thanks{Supported in part by grant 1462/13 from the Israel Science Foundation and the Vigevani Chair in Statistics.}
\and Masakiyo Miyazawa\thanks{Department of Information Sciences,
    Tokyo University of Science,
    Noda City, Chiba 278, Japan. \tt{miyazawa@rs.tus.ac.jp}}
\thanks{Supported in part by JSPS KAKENHI Grant Number 16H027860001.}}

\date{April 04, 2017}

\maketitle
\begin{abstract}
A classical result for the steady-state queue-length distribution of single-class queueing systems is the following:
the distribution of the queue length just before an arrival epoch equals
the distribution of the queue length just after a departure epoch.
The constraint for this result to be valid is that arrivals, and also service completions,
with probability one occur individually, i.e., not in batches.
We show that it is easy to write down somewhat similar balance equations for {\em multidimensional} queue-length processes for a quite general
network of multiclass multiserver queues.
We formally derive those balance equations under a general framework. They are called distributional relationships, and are obtained for any external arrival process and state dependent routing as long as certain stationarity conditions are satisfied and external arrivals and service completions do not simultaneously occur.
We demonstrate the use of these balance equations, in combination with PASTA, by (i) providing very simple derivations of some known results for polling systems, and (ii) obtaining new results for some queueing systems with priorities.
We also extend the distributional relationships for a non-stationary framework.

\bigskip\noindent
\textbf{Keywords:} queue length; steady-state distribution; balance equations; distributional relationship; Palm distribution; non-stationary framework.
\end{abstract}

\section{Introduction}
A classical result for the steady-state queue-length distribution of single-class queueing systems is the following:
the distribution of the queue length just before an arrival epoch
equals the distribution of the queue length just after a departure epoch.
The constraint for this result to be valid is that, with probability one,
arrivals, and also service completions, occur individually, i.e., not in batches.
The result then follows by a simple level-crossing argument: in steady state, the event that a customer
arrives to find $j$ customers present occurs just as often as the event
that a customer leaves $j$ customers behind, for all $j=0,1,\dots$.
See \cite{Cooper}, pp. 154-156, for a formal statement and proof (due to P.J. Burke, unpublished) of this result.

At first sight this level-crossing argument breaks down
in higher dimensions, for example in the case of multiple customer classes.
Indeed, with $\vc{x }\ge \vc{0}$ and $\vc{e}_k$ being a unit vector with $1$ in the $k$th coordinate and zero elsewhere,
an $m$-dimensional process can leave state $\vc{x}$ because of an arrival of a customer of type $i$,
and enter that state from
state $\vc{x}+\vc{e}_k$ because of a departure of a customer of {\em another} type $k$.
However, we shall argue that it is easy to write down a more global balance equation for multidimensional queue length processes
for a large class of queues and queueing networks -- also when service times are not exponentially distributed,
and even when arrivals may occur in batches.
We shall explore that fact to obtain a simple relation between the steady-state joint queue-length distribution
at arrival epochs (which under various circumstances is equal to the time average distribution) and at service completion epochs.
Once one has a relation between the probability generating function (PGF) at arbitrary epochs and at service completion epochs,
one can find the former when one has the latter.
The latter results are indeed known in
an $M/G/1$ setting, where it is natural to look at departure epochs.
This will yield both new results (for multiclass queueing models with fixed priorities and
for the longer-queue model),
as well as new and simple derivations of known results for, e.g., polling models.

The research for the present paper was initially motivated by the desire to provide an intuitive
explanation of a result in \cite{BKK} regarding the steady-state joint queue-length distribution in a large class of polling models.
That distribution turned out to have a remarkably simple relation with a weighted sum of the joint queue length
distributions at departure epochs of customers from each of the queues.
In Section~\ref{sec:2} we provide such an explanation.
Although balance equations are intuitively appealing, their mathematical verification may require a large amount of work. This motivates us to derive
distributional relationships for queue lengths in a unified way using a general tool. The so called rate conservation law is such a tool as demonstrated in \cite{Miya2010} (also see \cite{BaccBrem2003,Miya1994}). This method is applicable to a general model, but requires Palm distributions, which may not be easy to understand.
In \sectn{formal} of this paper we take another approach, based on a time evolution of a sample path. This approach is parallel to the rate conservation law, but does not require Palm distributions, which are replaced by sample averages. We apply it to a general model, and derive a distributional relationship among different embedded epochs.
In \sectn{non stationary} a non-stationary version of the distributional relationship is derived with some error term, which vanishes as time goes to infinity.
Our main result, viz.
Theorem~\ref{thr:stationary relation 1},
as well as the non-stationary results, are novel to the best of our knowledge.
\\

\noindent
{\em Literature review.}
H\'ebuterne \cite{Hebuterne} provides a generalization of the above-mentioned classical result of Burke
in two directions: he allows (i) batch arrivals, with batches of random size,
and (ii) batch services, with batches of fixed size.
He also points out that emptying the queue up to $N$ customers is beyond the scope of the analysis,
because then the batch sizes  are not independent of the system state.
Fakinos \cite{Fakinos} manages to treat a quite general group-arrival group-departure queue.
He treats the batch size problem by assuming that customers within a departing group  are randomly ordered,
and that they leave the system according to their order.
Papaconstantinou and Bertsimas \cite{PapaBertsimas} generalize Burke's result to the multiserver $E_k/G/s$ queue.
Kim \cite{Kim} combines the features of batch arrivals, batch services and multiple servers,
also allowing multiple customer classes. he does not explicitly address the issue of customers in a departing group being randomly ordered.
H\'ebuterne and Rosenberg \cite{HebuterneRosenberg} focus on the $G/G/1$ queue with batch services and finite capacity.
Takine has obtained several relations between  queue lengths at random instants and at departure instants;
see in particular the very general Theorem 1 in \cite{Takine}, for a single server queue with multiple Markovian arrival streams -- an extension
of Markovian arrival processes to (possibly correlated) multiple arrival streams.
\\

\noindent
{\em Organization of the paper.}
Section~\ref{sec:2} provides a short proof of a result in \cite{BKK} by using a multi-dimensional queue-length balance argument.
\sectn{formal} derives the distributional relationship for an open queueing network under a very general setting in \thr{stationary relation 1}.
Extensions
to the non-stationary case are discussed in \sectn{non stationary}.
Some applications are presented in Section~\ref{sec:3}.
\sectn{concluding} contains concluding remarks.

\section{A balance equation for a class of polling models}
\label{sec:2}
In this section we provide a simple relation between the steady-state joint queue-length distribution at arbitrary epochs and at departure epochs for polling models. This relation, which is derived by introducing a multi-dimensional queue-length balance argument, is used to provide a short, but somewhat intuitive derivation of Theorem 1 of \cite{BKK}.
In the next section we shall extend that balance equation in a very general setting, and give a rigorous derivation.
Let us first describe the polling model studied in \cite{BKK}.

Consider a system of $m\ge 1$ infinite-buffer queues $Q_1,\ldots, Q_m$ and a
single server $S$.
Queues are indexed by $J=\{1,2,\ldots,m\}$.
The service times of customers in $Q_i$ are i.i.d. (independent, identically distributed) positive
random variables generically denoted by $B_i$, with means $b_i:=\ee B_i$.
Denote the Laplace-Stieltjes transform  (LST) of $B_i$ by $\tilde B_i(\cdot)$. The server moves among the queues
in a cyclic order. When $S$ moves from $Q_i$ to $Q_{i+1}$, it incurs a switchover period. The durations of successive switchover times are i.i.d. non-negative random variables, which we generically denote by $S_i$. Denote the LST of $S_i$ by $\tilde S_i(\cdot)$ and assume that $s_i:=\ee S_i<\infty$; let $s:=\sum_{i=1}^m s_i$.
Customers arrive at $Q_i$ according to a Poisson process with rate
$\lambda_i$; let $\lambda:=\sum_{i=1}^m\lambda_i$. We do not assume anything
about the service disciplines at $Q_i$.
Define $\rho_i:=\lambda_i b_i$ as
the traffic intensity at $Q_i$; let $\rho:=\sum_{i=1}^m\rho_i$. We assume that $\rho < 1$, which is a necessary condition for the system to be stable.
In what follows we shall write $\bz$ for
an $m$-dimensional vector in $\rr^m$, $\bz=(z_1,\ldots,z_m)$, and we assume that $|z_i| \leq 1$ for
every $i \in J$. We implicitly use the convention that any index summation is modulo $m$, for example $Q_{m+1}\equiv Q_1$.

Assume that all the usual independence assumptions hold between the service times,
the switch\-over times and the interarrival times. We assume that the ergodicity conditions are fulfilled
and we restrict ourselves to results for the stationary situation.

Now introduce the PGF of various joint queue-length distributions:
$V^b_i(\bz)$ and $V^c_i(\bz)$ denote the PGFs of the joint queue-length distribution at visit beginnings
and visit completions at $Q_i$, while $S^b_i(\bz)$ and $S^c_i(\bz)$ denote the PGFs of the joint
queue-length distribution at service beginnings and service completions at $Q_i$;
$L(\bz)$ denotes the PGF of the joint queue-length distribution at an arbitrary time in steady-state.
Theorem 1 of \cite{BKK} states that, with mean cycle time $\ee C = \frac{s}{1-\rho}$:
\begin{equation}
\label{eq:QL}
L(\bz)=
\frac{1}{\ee C}\sum_{i=1}^m\left(
\frac{V^b_i(\bz)-V^c_i(\bz)}{\Sigma(\bz)}
\frac{z_i\left(1-\tilde B_i(\Sigma(\bz))\right)}{z_i-\tilde B_i(\Sigma(\bz))}
+
\frac{V^c_i(\bz)-V^b_{i+1}(\bz)}{\Sigma(\bz)}
\right),
\end{equation}
with $\Sigma(\bz):=\sum_{j=1}^m \lambda_j(1-z_j)$.

Its proof in \cite{BKK} is based on the following relations:
\\
(i) a balance relation for polling systems, which is due to Eisenberg \cite{Eisenberg} and which was generalized
in \cite{BB}:
\begin{equation}
\label{eq:equilibrium}
\gamma_i V^b_i(\bz)+S^c_i(\bz)= S^b_i(\bz)+\gamma_i V^c_i(\bz), \quad i \in J.
\end{equation}
Here $\gamma_i:=1/\lambda_i\ee C$ represents the reciprocal of the mean number of customers served at $Q_i$
per visit, i.e., the long-term ratio of visit beginnings to service beginnings.
\\
(ii) an obvious relation between queue lengths at the beginning and end of a service time:
\begin{equation}
\label{eq:relation}
S^c_i(\bz)=S^b_i(\bz)\frac{\tilde B_i(\Sigma(\bz))}{z_i}, \quad i \in J.
\end{equation}
(iii) an obvious relation between queue lengths at the beginning and end of a switchover time:
\begin{equation}
\label{eq:rel}
V^b_{i+1}(\bz)=V^c_i(\bz)\tilde S_i\left(\Sigma(\bz)\right), \quad i \in J.
\end{equation}
(iv) a stochastic mean value theorem, expressing $L(\bz)$ as an average over the PGFs
of the joint queue-length distribution at an arbitrary moment during a visit to $Q_i$ ($X_i(\bz)$)
and during a switch\-over period between $Q_i$ and $Q_{i+1}$ ($Y_i(\bz)$):
\begin{equation}
\label{eq:smvt}
L(\bz)=\frac{1}{\ee C}\sum_{i=1}^m\left(\frac{b_i}{\gamma_i}
X_i(\bz)
+
s_iY_i(\bz)
\right),
\end{equation}
where, for $i \in J$,
\begin{align}
\label{eq:Xi}
 & X_i(\bz)=S^b_i(\bz) \tilde B_i^{\text{past}}(\Sigma(\bz)),\\
\label{eq:Yi}
 & Y_i(\bz)=V^c_i(\bz) \tilde S_i^{\text{past}}(\Sigma(\bz)),
\end{align}
where $\tilde B_i^{\text{past}}(\cdot)$ and $\tilde S_i^{\text{past}}(\cdot)$ are the LST's of the past (elapsed) parts of $B_i$ and $S_i$, respectively, that is, they are defined as
\begin{align*}
  \tilde B_i^{\text{past}}(\Sigma(\bz)) = \frac {1 - \tilde B_i(\Sigma(\bz))}{b_{i} \Sigma(\bz)}, \qquad \tilde S_i^{\text{past}}(\Sigma(\bz)) = \frac {1 - \tilde S_i(\Sigma(\bz))}{s_{i} \Sigma(\bz)}.
\end{align*}
Starting from (\ref{eq:smvt}), substituting (\ref{eq:Xi}) and (\ref{eq:Yi}),
and using (\ref{eq:equilibrium}) and (\ref{eq:relation}) to eliminate
all $S^c_i(\bz)$ and $S^b_i(\bz)$, yields (\ref{eq:QL}).
\begin{remark}{\rm
In \cite{BKK} also zero switchover times are allowed;
the same result (\ref{eq:QL}) is shown to hold.}
\end{remark}

In Theorem 1 of \cite{BKK} it was subsequently observed that one may simplify (\ref{eq:QL}) as follows, by using (\ref{eq:equilibrium})
and (\ref{eq:relation}):
\begin{equation}
\label{eq:simple}
L(\bz) = \frac{\sum_{i=1}^m \lambda_i (1-z_i) S^c_i(\bz)}{\sum_{i=1}^m \lambda_i(1-z_i)}.
\end{equation}

This formula is remarkably simple; please notice that it does not involve
the service time distributions, and that the service disciplines at the various queues also
do not play a role, which suggests that (\ref{eq:QL}) is based on very general principles.
This is the formula for which we would like to provide a short proof (see below).
In combination with (\ref{eq:equilibrium}) 
-- (\ref{eq:rel}), it also gives a short proof of (\ref{eq:QL}).
In other words, one can obtain an expression for the PGF of the joint steady-state queue-length distribution in a large class of polling systems
by just using the elementary balance equations (\ref{eq:equilibrium}) and (\ref{eq:simple2}) (see below), combined with the obvious
relations (\ref{eq:relation}) and (\ref{eq:rel}).
\\

\noindent
{\em Short proof of (\ref{eq:simple})}.
\\
First rewrite (\ref{eq:simple}) into
\begin{equation}
\label{eq:simple2}
\sum_{i=1}^m \lambda_i(1-z_i) L(\bz) = \sum_{i=1}^m \lambda_i (1-z_i) S^c_i(\bz).
\end{equation}
Secondly, observe that, because of the Poisson arrival processes,
$L(\bz)$ is also the PGF of the joint queue-length distribution just before an arrival at $Q_i$,
$i \in J$ by PASTA (Poisson Arrival See Time Averages, e.g., see \cite{BaccBrem2003,Miya1994}).
\\
Thirdly, invert the transform expressions on both sides of (\ref{eq:simple2}),
yielding for $\vc{x}\ge\vc{0}$ and $\vc{e}_i$ being the unit vector with $1$ in the $i$th coordinate and zero elsewhere:
\begin{align}
\sum_{i=1}^m \lambda_i \pi^{e}_i(\vc{x})-\sum_{i=1}^m \lambda_i \pi^{e}_i(\vc{x}-\vc{e}_i)
=\sum_{i=1}^m \lambda_i \pi_i^d(\vc{x})-\sum_{i=1}^m \lambda_i \pi_i^d(\vc{x}-\vc{e}_i),
\end{align}
where $\pi_i^d(\cdot)$ indicates that we consider the joint queue-length distribution
right {\em after} a departure from $Q_i$, and $\pi^{e}_i(\cdot)$ denotes that we view the system just \emph{before} an external arrival at $Q_i$.
Fourthly, we reshuffle the terms:
\begin{equation}
\sum_{i=1}^m \lambda_i \pi^{e}_i(\vc{x})+\sum_{i=1}^m \lambda_i \pi_i^d(\vc{x}-\vc{e}_i)
=\sum_{i=1}^m \lambda_i \pi_i^d(\vc{x})+\sum_{i=1}^m \lambda_i \pi_i^{e}(\vc{x}-\vc{e}_i).
\label{bal1}
\end{equation}
Finally, observe that the lefthand side of (\ref{bal1}) represents the rate
out of state $\vc{ x}$,
and the righthand side represents the rate into that state.
Indeed, the first term in the lefthand side corresponds to arrivals
which find $\vc{x}$ customers in the system.
The second term in the lefthand side  is slightly less obvious. It corresponds to departures that take place in state
$\vc{ x}$. Notice that the rate at which customers depart from $Q_i$ equals $\lambda_i$
(although the departure process will not be a Poisson process),
and that
$\pi_i^d(\vc{ x}-\vc{ e}_i)$ is the fraction of departures from $Q_i$ which
take the system  out of state $\vc{ x}$.
Similarly interpret the terms in the righthand side.
We conclude that (\ref{eq:simple}) amounts to a simple flow balance formula.
\begin{remark}{\rm
A similar flow balance argument was used in \cite{BT} to derive a queue-length expression
in an $M/G/1$ FCFS queue with multiple customer classes.}
\end{remark}
\begin{remark}{\rm
Observe that (\ref{eq:simple}) immediately gives the formula for the marginal distributions.
Indeed, for a vector $\bz_{m,i}=(1,\ldots,1,z_i,1,\ldots,1)$, $L(\bz_{m,i})=S^c_i(\bz_{m,i})$.
From the well-known `step' (level-crossing) argument it follows that $S^c_i(\bz_{m,i})$ is also the PGF of
the queue-length distribution in $Q_i$ at an {\em arrival} epoch
at $Q_i$. By PASTA it is also the PGF of the steady-state distribution of $Q_i$.

Next take $\bz_T=(z,\ldots,z)$. (\ref{eq:simple}) now states that
the PGF of the distribution of the total queue length (in terms of $z$) equals
$\sum_{i=1}^m \lambda_i S^c_i(\bz_T)/\sum_{j=1}^m \lambda_j$.
This formula may be interpreted as follows. By PASTA, $L(\bz_T)$ is also the PGF
of the distribution of the total queue length at an arrival epoch.
By a level-crossing argument, it follows that this equals the PGF of the distribution
of the total queue length just after a departure epoch.
The result now follows from the observation
that a fraction $\lambda_i / \sum_{j=1}^m \lambda_j$
of the departure epochs refers to a departure from $Q_i$.}
\end{remark}
\begin{remark}{\rm
Relation (\ref{eq:simple}) may be viewed as an $m$-dimensional version of the above-mentioned one-dimensional
`step' (level-crossing) relation that holds for queues with single arrivals and single departures.}
\end{remark}

\section{Formal derivations under a general framework}
\label{sect:formal}

In this section, we aim to derive
distributional relationships at arrival and departure instants for various queues and their network models
in a unified way, under general settings. Roughly speaking, these settings allow simultaneous external arrivals, simultaneous departures and routing at different stations; however, we do not allow an external arrival to coincide with a departure. We use their time evolutions in sample paths for deriving the relationships rather than using flow balance.

We describe a queueing network system under a fairly general framework. We consider an open queueing network system with $m$ queues, where queues uniquely belong to service facilities, which are called stations.
Queues in the same station may be distinguished by customer classes. Each station may have multiple servers, which may change in time. External arrivals at queues are general as long as they satisfy certain stationarity conditions. Customers completing service may be routed among queues depending on the state of the whole system. Thus, this model is quite general and very flexible.

To describe this model, we introduce a stochastic process.
Queues are still indexed by $J=\{1,2,\ldots,m\}$.
Let
\begin{align*}
  \vc{X}(t)=(X_1(t),\ldots,X_m(t)),
\end{align*}
where $X_{i}(t)$ represents the length of queue $i$ at time $t$, which includes customers in service. Here, each queue belongs to a single station. There is a mapping from queues to stations, which will be given when needed.

In addition to $\vc{X}(t)$, the following counting processes count the number of specified events until time $t\ge0$ for $i\in J$,
\begin{itemize}
\item $N^{e}_{i}(t)$ - external arrivals at queue $i$,
\item $N^{d}_{i}(t)$ - departures from queue $i$,
\item $N^{r}_{i}(t)$ - internal arrivals at queue $i$ (transition from some queue).
\end{itemize}

With $\vc{N}^u(t)=(N_1^u(t),\ldots,N_m^u(t))$ for $u=e,d,r$, we consider the process
\begin{align*}
  \vc{Z}(t) \equiv (\vc{X}(t),\vc{N}^{e}(t),\vc{N}^{d}(t),\vc{N}^{r}(t))\ .
\end{align*}
All processes are assumed right-continuous with left limits. Let $\Delta \vc{X}(t) = \vc{X}(t) - \vc{X}(t-)$. $\Delta \vc{N}^{u}(t)$ is similarly defined and is in $\dd{Z}_+^m$ for $u = e,d,r$,
where $\dd{Z}_{+}$ is the set of nonnegative integers.

For $u=e,d,r$, denote
\begin{align*}
  |\vc{N}^{u}|(t) = \sum_{i \in J} N^{u}_{i}(t)
\end{align*}
and assume that
\begin{itemize}
\item [i)] $\vc{X}(0), \vc{N}^{e}(t),\vc{N}^{d}(t),\vc{N}^{r}(t)$ are all finite (in $\dd{Z}_+^m$) for each $t \ge 0$.
\item [ii)] $\Delta |\vc{N}^{e}|(t) \Delta |\vc{N}^{d}|(t) = 0$ for each $t \geq 0$. That is, external arrivals and service completions can not occur simultaneously.
\end{itemize}
We also need to define the intermediate state
\begin{align}
\label{eq:dynamics 1}
  \vc{X}^{d}(t) = \vc{X}(t) - \Delta \vc{N}^{r}(t)\in\dd{Z}_+^m\ .
\end{align}
It differs from $\vc{X}(t)$ only at departure epochs and it describes the state ``after'' a departure and ``before'' an internal arrival at a different queue.

Clearly, the following dynamics hold.
\begin{align}
\label{eq:dynamics 2}
 \vc{X}(t) = \vc{X}(0) + \vc{N}^{e}(t) - \vc{N}^{d}(t) + \vc{N}^{r}(t)\in\dd{Z}_+^m\ .
\end{align}

Because of i), $\vc{X}(t)$ and $\vc{X}^{d}(t)$ are also finite. It may be natural to assume that $|\vc{N}^{r}|(t) \le |\vc{N}^{d}|(t)$ for $t \ge 0$, but we do not require it in this section.

Thus, $\vc{X}(t)$ is the state of the system at time $t$ of an input-output system driven by counting processes $\vc{N}^{e},\vc{N}^{d},\vc{N}^{r}$. The dynamics of \eq{dynamics 1} and \eq{dynamics 2} indicates that we adopt the {\em departure first} framework. We have used queueing terminologies, but our results are valid as long as the above mathematical assumptions and \eq{dynamics 2} are satisfied.

In general, $|\vc{N}^{e}|(t), |\vc{N}^{d}|(t)$ and $N^{e}_{i}(t)$ and $N^{d}_{i}(t)$ may have jumps greater than one, which is not convenient to describe the time evolution of $\vc{Z}(t)$. Thus, for $u=e,d$, we introduce
\begin{equation*}
|\tilde{\vc{N}}^u|(t) = \sum_{0<s\le t} 1(\Delta|\vc{N}^u|(s)\ge 1),
\qquad {\tilde N}^{u}_{i}(t) = \sum_{0<s\le t} 1(\Delta N^u_i(s)\ge 1),
\end{equation*}
then $\Delta |\tilde{\vc{N}}^{u}|(t) \le 1$ and $\Delta \tilde N^{u}_{i}(t) \le 1$, that is, $|\tilde{\vc{N}}^{u}|$ and $\tilde N^{u}_{i}$ are simple point processes for $u=e,d$. Set $t_0^e=t_0^d=t_{i,0}^e=t_{i,0}^d=0$ for $i\in J$, and for $n\ge 1$ and $i\in J$ let $t^e_n,t^d_n,t^e_{i,n},t^d_{i,n}$ be the $n^{\text{th}}$ jump epoch of $|\tilde{\vc{N}}^e|,|\tilde{\vc{N}}^d|,\tilde{N}^e_i,\tilde{N}^d_i$, respectively (of course, if the corresponding process is not terminating and such an epoch exists).

Another basic assumption on the counting processes is
\begin{itemize}
\item [iii)] There exist finite and positive numbers $\lambda^{u}$, $u=e,d$ such that
\begin{align}
\label{eq:rate 1}
  \lambda^{u} = \lim_{t \to \infty} \frac 1t |\tilde{\vc{N}}^{u}|(t)\ ,
\end{align}
a.s. (almost surely) w.r.t. the underlying probability measure $\dd{P}$.
\end{itemize}

We further assume the following ergodic type conditions.
\begin{itemize}
\item [iv)] There exist probability distributions $\pi^{e}$ and $\pi^{d}$ such that
\begin{align}
\label{eq:stationary 1}
 & \lim_{n \to \infty} \frac 1n \sum_{\ell=1}^{n} 1(\vc{X}(t^{e}_{\ell}-) = \vc{x}, \Delta \vc{N}^{e}(t^{e}_{\ell}) = \vc{y}) = \pi^{e}(\vc{x},\vc{y}), \quad a.s., \quad \vc{x}, \vc{y} \in \dd{Z}_+^m,\\
\label{eq:stationary 2}
 & \lim_{n \to \infty} \frac 1n \sum_{\ell=1}^{n} 1(\vc{X}^{d}(t^{d}_{\ell}) = \vc{x}, \Delta \vc{N}^{d}(t^{d}_{\ell}) = \vc{y}, \Delta \vc{N}^{r}(t^{d}_{\ell}) = \vc{z}) = \pi^{d}(\vc{x},\vc{y},\vc{z}), \quad a.s.,\nonumber\\
 & \hspace{45ex}\qquad \vc{x}, \vc{y}, \vc{z} \in \dd{Z}_+^m .
\end{align}
\end{itemize}

From the definitions in iv), $\pi^{e}$ and $\pi^{d}$ are considered as the embedded stationary distributions just before arrival epochs and just after departure epochs but before internal arrivals, respectively. They correspond to Palm distributions concerning their counting processes in the time stationary framework (e.g., see \cite{BaccBrem2003}).

Since the process $\vc{X}(t)$ is vector valued, it is not so convenient for manipulations. So, we introduce a test function
$f:\dd{Z}_{+}^{m}\to\dd{R}$.
Under the setting i)--iv), we will derive distributional relationships among characteristics at different embedded instants using the test function $f$. For this, we need the following lemma.
\begin{lemma}
\label{lem:stationary 1}
If \eq{stationary 1} holds, then, for any bounded function $g:\dd{Z}_+^{2m}\to\dd{R}$, we have
\begin{align}
\label{eq:stationary f1}
 & \lim_{n \to \infty} \frac 1n \sum_{\ell=1}^{n} g(\vc{X}(t^{e}_{\ell}-),\Delta \vc{N}^{e}(t^{e}_{\ell})) = \sum_{\vc{x},\vc{y} \in \dd{Z}_+^m} g(\vc{x},\vc{y}) \pi^{e}(\vc{x},\vc{y}), ~~~ {\rm a.s.}
\end{align}
Similarly, if \eq{stationary 2} holds, then, for any bounded function $h:\dd{Z}_+^{3m}\to\dd{R}$, we have
\begin{align}
\label{eq:stationary g1}
 & \lim_{n \to \infty} \frac 1n \sum_{\ell=1}^{n} h(\vc{X}^{d}(t^{d}_{\ell}),\Delta \vc{N}^{d}(t^{d}_{\ell}),\Delta \vc{N}^{r}(t^{d}_{\ell})) = \sum_{\vc{x},\vc{y},\vc{z} \in \dd{Z}_+^m} h(\vc{x},\vc{y},\vc{z}) \pi^{d}(\vc{x},\vc{y},\vc{z}), ~~~ {\rm a.s.}
\end{align}
\end{lemma}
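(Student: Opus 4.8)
The plan is to read \eq{stationary f1} as the assertion that the random empirical measures generated by the embedded arrival sequence converge, almost surely, when integrated against every bounded test function, and then to reduce this to hypothesis \eq{stationary 1}, which is exactly pointwise convergence of the atoms of those empirical measures. For each $n\ge 1$ define the random probability mass function on $\dd{Z}_+^{2m}$
\[
  \mu_n(\vc{x},\vc{y}) = \frac 1n \sum_{\ell=1}^{n} 1(\vc{X}(t^{e}_{\ell}-) = \vc{x},\ \Delta \vc{N}^{e}(t^{e}_{\ell}) = \vc{y}).
\]
By assumption i) each pair $(\vc{X}(t^{e}_{\ell}-),\Delta \vc{N}^{e}(t^{e}_{\ell}))$ lies in $\dd{Z}_+^{2m}$, so $\mu_n$ is genuinely a probability measure, $\sum_{\vc{x},\vc{y}} \mu_n(\vc{x},\vc{y}) = 1$ for every $n$. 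The left-hand side of \eq{stationary f1} is precisely $\sum_{\vc{x},\vc{y}} g(\vc{x},\vc{y})\mu_n(\vc{x},\vc{y})$, and the right-hand side is the integral of $g$ against $\pi^{e}$.

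First I would fix the almost-sure event on which the convergence occurs. Hypothesis \eq{stationary 1} gives, for each fixed $(\vc{x},\vc{y})$, a $\dd{P}$-full event on which $\mu_n(\vc{x},\vc{y}) \to \pi^{e}(\vc{x},\vc{y})$. Since $\dd{Z}_+^{2m}$ is countable, the intersection of these events over all $(\vc{x},\vc{y})$ still has probability one, and on it $\mu_n \to \pi^{e}$ pointwise on every atom simultaneously. From here on I work at a fixed sample point of this event, so the remaining problem is purely deterministic.

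The main step, and the only genuine obstacle, is that pointwise convergence of the atom masses does not by itself transfer to bounded test functions on an infinite state space, because mass could in principle leak to infinity. This is exactly what Scheff\'e's lemma controls: $\mu_n$ and $\pi^{e}$ are both probability mass functions on the countable set $\dd{Z}_+^{2m}$ (recall that $\pi^e$ is a probability distribution by iv)) with $\mu_n \to \pi^{e}$ pointwise, so their total variation distance vanishes,
\[
  \sum_{\vc{x},\vc{y} \in \dd{Z}_+^m} \abs{\mu_n(\vc{x},\vc{y}) - \pi^{e}(\vc{x},\vc{y})} \longrightarrow 0.
\]
Concretely, $\sum_{\vc{x},\vc{y}} (\pi^{e} - \mu_n)^{+} \to 0$ by dominated convergence, the summable dominating function being $\pi^{e}$ itself, and the common total mass $1$ upgrades this one-sided bound to control of the full $L^1$ norm.

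Finally, for any bounded $g$ with $M := \sup \abs{g} < \infty$,
\[
  \left| \sum_{\vc{x},\vc{y}} g(\vc{x},\vc{y})\mu_n(\vc{x},\vc{y}) - \sum_{\vc{x},\vc{y}} g(\vc{x},\vc{y})\pi^{e}(\vc{x},\vc{y}) \right| \le M \sum_{\vc{x},\vc{y}} \abs{\mu_n(\vc{x},\vc{y}) - \pi^{e}(\vc{x},\vc{y})} \to 0,
\]
which is \eq{stationary f1}. The proof of \eq{stationary g1} is identical, replacing $\dd{Z}_+^{2m}$ by $\dd{Z}_+^{3m}$, the empirical measures by those built from $(\vc{X}^{d}(t^{d}_{\ell}),\Delta \vc{N}^{d}(t^{d}_{\ell}),\Delta \vc{N}^{r}(t^{d}_{\ell}))$, and \eq{stationary 1} by \eq{stationary 2}.
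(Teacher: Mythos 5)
Your proof is correct, and it isolates exactly the right issue: pointwise convergence of the atoms alone would not suffice on an infinite state space, and it is the assumption in iv) that $\pi^{e}$ is a \emph{probability} distribution that rules out escape of mass. Your argument and the paper's share the same two pillars (a countable intersection of almost-sure events, then a purely deterministic limit statement for probability mass functions), but the deterministic step is carried out by different means. The paper proves it by hand with a truncation: for each $\epsilon > 0$ it picks a finite set $\sr{S}_{a}$ carrying $\pi^{e}$-mass at least $1-\epsilon$, sums \eq{stationary 1} over this finite set (where limit and finite sum trivially commute), deduces that the empirical measures asymptotically place mass less than $\epsilon$ outside $\sr{S}_{a}$, bounds both tail contributions by $\|g\|\epsilon$, and lets $\epsilon \downarrow 0$ at the end. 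You instead pass through Scheff\'e's lemma: pointwise convergence of pmfs with common total mass $1$ gives total variation convergence, via dominated convergence applied to $(\pi^{e}-\mu_{n})^{+}$ with the summable dominating function $\pi^{e}$, the equality of total masses upgrading the one-sided estimate to the full $L^{1}$ norm. The two routes are of comparable length and both elementary, but they buy slightly different things: yours is more modular and yields the stronger intermediate conclusion that the empirical measures converge in total variation, so \eq{stationary f1} holds uniformly over $\{g : \|g\|_{\infty} \le 1\}$; the paper's is entirely self-contained, using nothing beyond finite sums and limsup estimates --- in effect an inlined proof of the same Scheff\'e-type statement. Your closing remark that \eq{stationary g1} follows by the identical argument with $\dd{Z}_+^{3m}$ in place of $\dd{Z}_+^{2m}$ matches the paper, which also proves only the arrival case.
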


This lemma may look obvious, but its proof is not immediate because we need to verify the exchange of limits. We prove it in \app{stationary 1}.

We are now ready to prove distributional relationships. First, we denote the expectations under $\pi^{e}$ and $\pi^{d}$ by $\dd{E}^{e}$ and $\dd{E}^{d}$, respectively. That is,
\begin{align}
\label{eq:Palm e}
 & \dd{E}^{e}g(\vc{X}, \vc{Y}) = \sum_{\vc{x},\vc{y} \in \dd{Z}_+^m} g(\vc{x},\vc{y}) \pi^{e}(\vc{x},\vc{y}), \\
\label{eq:Palm d}
 & \dd{E}^{d}h(\vc{X},\vc{Y},\vc{Z}) = \sum_{\vc{x},\vc{y},\vc{z} \in \dd{Z}_+^m} h(\vc{x},\vc{y},\vc{z}) \pi^{d}(\vc{x},\vc{y},\vc{z}).
\end{align}
Note that $\vc{Y}$ in $\dd{E}^{e}$ represents sizes of externally arriving batches, while $\vc{Y}$ in $\dd{E}^{d}$ represents sizes of departing batches.
\begin{theorem}
\label{thr:stationary relation 1}
Under the setting i)--iv), for any bounded function $f:\dd{Z}_+^m\to\dd{R}$, we have
\begin{equation}
\label{eq:stationary relation 1}
  \lambda^{e} \dd{E}^{e}\big[f(\vc{X} + \vc{Y}) - f(\vc{X})\big]  + \lambda^{d} \dd{E}^{d}\big[f(\vc{X} + \vc{Z}) - f(\vc{X})\big]
   = \lambda^{d} \dd{E}^{d} \big[f(\vc{X} + \vc{Y}) - f(\vc{X}) \big].
\end{equation}
\end{theorem}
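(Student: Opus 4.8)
The plan is to track the increments of $f(\vc{X}(t))$ along a sample path and to time-average them, exploiting that $f(\vc{X}(\cdot))$ is a pure-jump process whose jumps occur only at the embedded epochs $\{t^e_\ell\}$ and $\{t^d_\ell\}$. Since $\vc{X}(t) = \vc{X}(0) + \vc{N}^e(t) - \vc{N}^d(t) + \vc{N}^r(t)$ has finitely many jumps on any $[0,t]$ by i), I would telescope
\begin{equation*}
f(\vc{X}(t)) - f(\vc{X}(0)) = \sum_{0 < s \le t} \big( f(\vc{X}(s)) - f(\vc{X}(s-)) \big).
\end{equation*}
Because $\vc{X}$ and $\vc{X}^d$ differ only at departure epochs, $\Delta\vc{N}^r(s)\neq\vc{0}$ forces $s$ to be a departure epoch; together with ii) this shows every jump of $\vc{X}$ is either a pure external-arrival epoch $t^e_\ell$, where $\Delta\vc{X} = \Delta\vc{N}^e$, or a pure departure epoch $t^d_\ell$, where $\Delta\vc{X} = -\Delta\vc{N}^d + \Delta\vc{N}^r$, and the two families are disjoint.

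Next I would rewrite each increment in the variables recorded by $\pi^e$ and $\pi^d$. At $t^e_\ell$ the increment is $f(\vc{X}(t^e_\ell-) + \Delta\vc{N}^e(t^e_\ell)) - f(\vc{X}(t^e_\ell-))$. At $t^d_\ell$ the departure-first bookkeeping $\vc{X}(t^d_\ell-) = \vc{X}^d(t^d_\ell) + \Delta\vc{N}^d(t^d_\ell)$ together with $\vc{X}(t^d_\ell) = \vc{X}^d(t^d_\ell) + \Delta\vc{N}^r(t^d_\ell)$, both coming from \eq{dynamics 1}, turns the increment into $f(\vc{X}^d(t^d_\ell) + \Delta\vc{N}^r(t^d_\ell)) - f(\vc{X}^d(t^d_\ell) + \Delta\vc{N}^d(t^d_\ell))$. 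Aligning these two representations with the triples appearing in \eq{stationary 1}--\eq{stationary 2} is the delicate bookkeeping step, and it is exactly where assumption ii) and the definition of $\vc{X}^d$ are genuinely used.

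Then I divide the telescoped identity by $t$ and let $t\toi$. As $f$ is bounded, the left-hand side $\frac1t(f(\vc{X}(t)) - f(\vc{X}(0)))\to 0$. Each sum on the right is written as a count over $t$ times an empirical average over that count, e.g. $\frac{|\tilde{\vc{N}}^e|(t)}{t}\cdot\frac{1}{|\tilde{\vc{N}}^e|(t)}\sum_{\ell=1}^{|\tilde{\vc{N}}^e|(t)}(\cdots)$. By iii) the prefactors converge to $\lambda^e$ and $\lambda^d$, and since $\lambda^e,\lambda^d>0$ the counts diverge, so by \lem{stationary 1} — applied with the bounded functions $g(\vc{x},\vc{y}) = f(\vc{x}+\vc{y}) - f(\vc{x})$ and $h(\vc{x},\vc{y},\vc{z}) = f(\vc{x}+\vc{z}) - f(\vc{x}+\vc{y})$ — the empirical averages converge a.s. to $\dd{E}^e[f(\vc{X}+\vc{Y}) - f(\vc{X})]$ and $\dd{E}^d[f(\vc{X}+\vc{Z}) - f(\vc{X}+\vc{Y})]$. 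This gives $0 = \lambda^e\dd{E}^e[f(\vc{X}+\vc{Y}) - f(\vc{X})] + \lambda^d\dd{E}^d[f(\vc{X}+\vc{Z}) - f(\vc{X}+\vc{Y})]$, and splitting the $\pi^d$-term as $\dd{E}^d[f(\vc{X}+\vc{Z}) - f(\vc{X}+\vc{Y})] = \dd{E}^d[f(\vc{X}+\vc{Z}) - f(\vc{X})] - \dd{E}^d[f(\vc{X}+\vc{Y}) - f(\vc{X})]$ rearranges it into \eq{stationary relation 1}.

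The analytic content is light once the jump structure is fixed; the real care lies in (a) certifying that $\vc{X}$ jumps only at the two embedded families and matching each increment to the correct Palm triple, and (b) legitimately combining the two almost-sure limits — the rate limit from iii) and the Cesàro limit from \lem{stationary 1} — along the random, diverging index $|\tilde{\vc{N}}^u|(t)$. I expect (a), the departure-first accounting, to be the step most prone to sign and ordering errors, since the intermediate state $\vc{X}^d$ sits between the pre-departure and post-departure values and must be threaded correctly through both $\dd{E}^d$-terms.
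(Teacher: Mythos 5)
Your proposal is correct and follows essentially the same route as the paper's own proof: the same telescoping of $f(\vc{X}(t))$ over the embedded epochs $t^e_\ell$, $t^d_\ell$, the same departure-first identities $\vc{X}(t^{d}_{\ell}-) = \vc{X}^{d}(t^{d}_{\ell}) + \Delta \vc{N}^{d}(t^{d}_{\ell})$ and $\vc{X}(t^{d}_{\ell}) = \vc{X}^{d}(t^{d}_{\ell}) + \Delta \vc{N}^{r}(t^{d}_{\ell})$ drawn from \eq{dynamics 1}--\eq{dynamics 2}, and the same passage to the limit after dividing by $t$, with boundedness of $f$ killing the boundary term and assumption iii) plus \lem{stationary 1} handling the rates and empirical averages. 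The only cosmetic difference is that you apply \lem{stationary 1} to the single combined test function $h(\vc{x},\vc{y},\vc{z}) = f(\vc{x}+\vc{z}) - f(\vc{x}+\vc{y})$ and split the resulting $\dd{E}^{d}$-term afterwards, whereas the paper splits the departure sum first (its equations \eq{test 2}--\eq{test 3}) and then takes limits term by term.
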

\begin{proof}
Since $f(\vc{X}(t))$ changes in time only at the counting instants $t^{e}_{n}$ or $t^{d}_{n}$, we have
(with $\Delta$ being defined as earlier in this section)
\begin{align}
\label{eq:test 1}
  f(\vc{X}(t)) - f(\vc{X}(0)) & = \sum_{\ell = 1}^{|\tilde{\vc{N}}^{e}|(t)} \Delta f(\vc{X}(t^{e}_{\ell})) + \sum_{\ell = 1}^{|\tilde{\vc{N}}^{d}|(t)} \Delta f(\vc{X}(t^{d}_{\ell})).
\end{align}
Recalling \eq{dynamics 1}, we have $\vc{X}(t^{d}_{\ell}) = \vc{X}^{d}(t^{d}_{\ell}) + \Delta \vc{N}^{r}(t^{d}_{\ell})$, and this and \eq{dynamics 2} yield
\begin{align*}
  \vc{X}(t^{d}_{\ell}-) & = \vc{X}(t^{d}_{\ell}) + \Delta \vc{N}^{d}(t^{d}_{\ell}) - \Delta \vc{N}^{r}(t^{d}_{\ell}) = \vc{X}^{d}(t^{d}_{\ell}) + \Delta \vc{N}^{d}(t^{d}_{\ell}).
\end{align*}
Substituting these $\vc{X}(t^{d}_{\ell})$ and $\vc{X}(t^{d}_{\ell}-)$ into \eq{test 1}, we have
\begin{align}
\label{eq:test 2}
  \sum_{\ell = 1}^{|\tilde{\vc{N}}^{d}|(t)} \Delta f(\vc{X}(t^{d}_{\ell})) & = \sum_{\ell = 1}^{|\tilde{\vc{N}}^{d}|(t)} (f(\vc{X}(t^{d}_{\ell})) - f(\vc{X}^{d}(t^{d}_{\ell}))) + \sum_{\ell = 1}^{|\tilde{\vc{N}}^{d}|(t)} (f(\vc{X}^{d}(t^{d}_{\ell})) - f(\vc{X}(t^{d}_{\ell}-))) \nonumber\\
  & = \sum_{\ell = 1}^{|\tilde{\vc{N}}^{d}|(t)} (f(\vc{X}^{d}(t^{d}_{\ell})+\Delta \vc{N}^{r}(t^{d}_{\ell})) - f(\vc{X}^{d}(t^{d}_{\ell}))) \nonumber\\
  & \qquad + \sum_{\ell = 1}^{|\tilde{\vc{N}}^{d}|(t)} (f(\vc{X}^{d}(t^{d}_{\ell})) - f(\vc{X}^{d}(t^{d}_{\ell}) + \Delta \vc{N}^{d}(t^{d}_{\ell}))) .
\end{align}
It follows from \eq{test 1} and \eq{test 2} that
\begin{align}
\label{eq:test 3}
 \sum_{\ell = 1}^{|\tilde{\vc{N}}^{e}|(t)} & (f(\vc{X}(t^{e}_{\ell}-) + \Delta \vc{N}^{e}(t^{e}_{\ell})) - f(\vc{X}(t^{e}_{\ell}-))) + \sum_{\ell = 1}^{|\tilde{\vc{N}}^{d}|(t)} (f(\vc{X}^{d}(t^{d}_{\ell})+\Delta \vc{N}^{r}(t^{d}_{\ell})) - f(\vc{X}^{d}(t^{d}_{\ell}))) \nonumber\\
 & = \sum_{\ell = 1}^{|\tilde{\vc{N}}^{d}|(t)} (f(\vc{X}^{d}(t^{d}_{\ell}) + \Delta \vc{N}^{d}(t^{d}_{\ell})) - f(\vc{X}^{d}(t^{d}_{\ell}))) + f(\vc{X}(t)) - f(\vc{X}(0)).
\end{align}
Dividing both sides of this equation by $t$ and letting $t \to \infty$ yields \eq{stationary relation 1} by \eq{rate 1}--\eq{stationary 2} and \lem{stationary 1} because $f$ is bounded.
\end{proof}

The assumptions of \thr{stationary relation 1} exclude arrivals and departures to occur simultaneously, but allow them to occur separately as multiple simultaneous external arrivals or multiple simultaneous departures and routing. The model as well as the distributional relationship may be too general for queueing networks. To make them more specific, we make the following assumption.
\begin{itemize}
\item [v)] There exist finite and nonnegative numbers $\lambda^{d}_{A}$ for nonempty $A \subset J$, that is, $A \in 2^{J} \setminus \{\emptyset\}$, such that
\begin{align}
\label{eq:rate 2}
  \lambda^{d}_{A} = \lim_{t \to \infty} \frac 1t \tilde{N}^{d}_{A}(t), \qquad a.s.,
\end{align}
where, with notation $S_{A} \equiv \{\vc{x} \in \dd{Z}_{+}^{m}; x_{i} > 0, i \in A, x_{j} = 0, j \in J \setminus A\}$,
\begin{align}
\label{eq:NA 1}
  \tilde{N}^{d}_{A}(t) = \sum_{0<s\le t} 1(\Delta \vc{N}^d(s) \in S_{A} ).
\end{align}
\end{itemize}

Note that $\tilde{N}^{d}_{A}$ counts instants when departures occur simultaneously from queues $i \in A$ but there is no departure from queue $j \in J \setminus A$, while $\Delta \tilde{N}^{d}_{A}(t) \Delta \tilde{N}^{d}_{B}(t) = 0$ if $A \ne B$. Thus, the setting i)--v) still allows batch arrivals and batch departures and simultaneous transfer of customers in a departing batch.

We will use the following notation. For each $A \in 2^{J} \setminus \{\emptyset\}$, let, for $\vc{x}, \vc{y}, \vc{z} \in \dd{Z}_{+}^{m}$,
\begin{align*}
\pi^{d}_{A}(\vc{x},\vc{y},\vc{z}) = \left\{
\begin{array}{ll}
  \frac {\lambda^{d}}{\lambda^{d}_{A}} \pi^{d}(\vc{x}, \vc{y},\vc{z}), \quad & \lambda_{A}^d > 0,  \\
  0, & \lambda_{A}^d = 0.
\end{array}
\right.
\end{align*}
Since $\tilde{N}^{d}_{A}$ exclusively counts the increasing epochs of $|\tilde{\vc{N}}^{d}|$ for different $A$'s, we have
\begin{align}
\label{eq:exclusive 1}
  |\tilde{\vc{N}}^{d}|(t) = \sum_{A \in 2^{J} \setminus \{\emptyset\}} \tilde{N}^{d}_{A}(t),
\end{align}
which implies that $\lambda^d = \sum_{A \in 2^J \setminus \{\emptyset\}} \lambda^d_{A}$, and, for $A \in \{B \in 2^{J}| \lambda^d_{B} > 0\}$, $\pi^{d}_{A}$ is a probability distribution on $\dd{Z}_{+}^{3m}$, which can be restricted to $\dd{Z}_{+}^{m} \times S_{A} \times \dd{Z}_{+}^{m}$.

Let $t^{d}_{A,n}$ be the $n^{\text{th}}$ jump epoch of $\tilde{N}^{d}_{A}$. Just as \lem{stationary 1} does, the following lemma plays a key role; it is proved in \app{single 1}.

\begin{lemma}
\label{lem:single 1}
Under the setting i)--v), there exist probability distributions $\pi^{d}_{A}$ such that, for any bounded function $h:\dd{Z}_+^{3m}\to\dd{R}$,
with
$A \in 2^J \setminus \{\emptyset\}$,
\begin{align}
\label{eq:stationary local 1}
 & \lim_{n \to \infty} \frac 1n \sum_{\ell=1}^{n} h(\vc{X}^{d}(t^{d}_{A,\ell}), \Delta \vc{N}^{d}(t^{d}_{A,\ell}), \Delta \vc{N}^{r}(t^{d}_{A,\ell})) = \sum_{\vc{x},\vc{y},\vc{z} \in \dd{Z}_{+}^{m}} h(\vc{x},\vc{y},\vc{z}) \pi^{d}_{A}(\vc{x},\vc{y},\vc{z}), ~ {\rm a.s.}
\end{align}
\end{lemma}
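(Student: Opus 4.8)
The plan is to exhibit the $A$-restricted departure epochs as a thinning of the full departure sequence and then push the averaging result of \lem{stationary 1} through this thinning, correcting the normalization by a ratio of counting rates. I first dispose of the degenerate case: if $\lambda^{d}_{A}=0$ then $\tilde{N}^{d}_{A}$ has at most finitely many jumps, $\pi^{d}_{A}\equiv 0$ by definition, and \eq{stationary local 1} is vacuous, so I assume $\lambda^{d}_{A}>0$ throughout. By the definition \eq{NA 1} of $\tilde{N}^{d}_{A}$, its jump epochs $\{t^{d}_{A,k}\}$ are exactly those departure epochs $t^{d}_{\ell}$ at which $\Delta\vc{N}^{d}(t^{d}_{\ell})\in S_{A}$. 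Writing $m(n)$ for the position of $t^{d}_{A,n}$ in the full departure sequence, so that $t^{d}_{A,n}=t^{d}_{m(n)}$ and $m(n)=|\tilde{\vc{N}}^{d}|(t^{d}_{A,n})$, and introducing the bounded thinned test function $h_{A}(\vc{x},\vc{y},\vc{z})=h(\vc{x},\vc{y},\vc{z})\,1(\vc{y}\in S_{A})$, the indicator annihilates precisely the non-$A$ departure epochs and yields the key identity
\[
\sum_{k=1}^{n} h\big(\vc{X}^{d}(t^{d}_{A,k}),\Delta\vc{N}^{d}(t^{d}_{A,k}),\Delta\vc{N}^{r}(t^{d}_{A,k})\big) = \sum_{\ell=1}^{m(n)} h_{A}\big(\vc{X}^{d}(t^{d}_{\ell}),\Delta\vc{N}^{d}(t^{d}_{\ell}),\Delta\vc{N}^{r}(t^{d}_{\ell})\big).
\]

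Dividing by $n$ and inserting the factor $m(n)/m(n)$, the left-hand Cesàro average over $A$-epochs becomes $\tfrac{m(n)}{n}$ times the Cesàro average of $h_{A}$ over the first $m(n)$ departure epochs. Since $\lambda^{d}_{A}>0$ gives $t^{d}_{A,n}\to\infty$ and $m(n)\ge n\to\infty$, \lem{stationary 1} applied to the bounded function $h_{A}$ shows, along the subsequence $m(n)$, that the latter average converges a.s. to $\dd{E}^{d}h_{A}=\sum_{\vc{x},\vc{y},\vc{z}}h(\vc{x},\vc{y},\vc{z})\,1(\vc{y}\in S_{A})\,\pi^{d}(\vc{x},\vc{y},\vc{z})$. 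For the prefactor I use $n=\tilde{N}^{d}_{A}(t^{d}_{A,n})$ and $m(n)=|\tilde{\vc{N}}^{d}|(t^{d}_{A,n})$ together with \eq{rate 1} and \eq{rate 2}:
\[
\frac{m(n)}{n} = \frac{|\tilde{\vc{N}}^{d}|(t^{d}_{A,n})/t^{d}_{A,n}}{\tilde{N}^{d}_{A}(t^{d}_{A,n})/t^{d}_{A,n}} \longrightarrow \frac{\lambda^{d}}{\lambda^{d}_{A}} \quad \text{a.s.}
\]
Multiplying the two limits, the left side of \eq{stationary local 1} converges a.s. to $\tfrac{\lambda^{d}}{\lambda^{d}_{A}}\dd{E}^{d}h_{A}$; since $\pi^{d}_{A}=\tfrac{\lambda^{d}}{\lambda^{d}_{A}}\pi^{d}$ is supported on $\{\vc{y}\in S_{A}\}$, this equals $\sum_{\vc{x},\vc{y},\vc{z}}h(\vc{x},\vc{y},\vc{z})\,\pi^{d}_{A}(\vc{x},\vc{y},\vc{z})$, which is the claim. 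Taking $h\equiv 1$ in the identity makes the left average equal to $1$, forcing $\pi^{d}$ to assign mass $\lambda^{d}_{A}/\lambda^{d}$ to $\{\vc{y}\in S_{A}\}$, so $\pi^{d}_{A}$ has total mass one and is a genuine probability distribution.

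The only genuinely analytic step — exchanging the limit in $n$ with the infinite sum over $(\vc{x},\vc{y},\vc{z})\in\dd{Z}_{+}^{3m}$ for a general bounded $h$ — is already supplied by \lem{stationary 1}, so what remains is essentially bookkeeping. I expect the main point requiring care to be that all the almost-sure statements (the rate limits in iii) and v), the conclusion of \lem{stationary 1} applied to $h_{A}$, and the ratio limit above) must be arranged to hold on a single probability-one event, and that evaluating the limit of \lem{stationary 1} along the random, $h$-independent subsequence $m(n)$ is legitimate — which it is, because convergence of the full sequence of Cesàro averages descends to every subsequence. Consistency of the normalization, i.e. that $\pi^{d}$ gives mass $\lambda^{d}_{A}/\lambda^{d}$ to $\{\vc{y}\in S_{A}\}$, can alternatively be read off directly by feeding the indicator $1(\vc{y}\in S_{A})$ into \lem{stationary 1} and comparing with \eq{exclusive 1} and the rate identities \eq{rate 1}--\eq{rate 2}.
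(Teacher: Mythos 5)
Your proposal is correct and follows essentially the same route as the paper's own proof: both arguments rest on the same two ingredients, namely that the factor $1(\vc{y}\in S_{A})$ annihilates every departure epoch that is not an $A$-epoch (so sums over the $A$-epochs coincide with sums of the thinned quantity over all departure epochs), and that \eq{rate 1} and \eq{rate 2} give $\tilde{N}^{d}_{A}(t)/|\tilde{\vc{N}}^{d}|(t)\to\lambda^{d}_{A}/\lambda^{d}$, the only cosmetic difference being that the paper runs the argument for indicator functions and then upgrades to bounded $h$ via \lem{stationary 1}, while you apply \lem{stationary 1} once to the thinned function $h_{A}$ and pass to the subsequence $m(n)$. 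One small inaccuracy, harmless here: $\lambda^{d}_{A}=0$ does not imply that $\tilde{N}^{d}_{A}$ has finitely many jumps (a counting process can have infinitely many jumps yet zero long-run rate), so your dismissal of that case is not justified as stated; but, like the paper's proof, your argument genuinely covers the case $\lambda^{d}_{A}>0$, which is the only case in which $\pi^{d}_{A}$ is claimed to be a probability distribution.
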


By \eq{exclusive 1},
\thr{stationary relation 1} and \lem{single 1} yield the following corollary. As with $\dd{E}^{e}$ and $\dd{E}^{d}$, $\dd{E}^d_{A}$ stands for the expectation under $\pi^{d}_{A}$.

\begin{cor}
\label{cor:stationary relation 1}
Under the setting i)--v), for any bounded function $f:\dd{Z}_+^m\to\dd{R}$,
\begin{align}
\label{eq:stationary relation 2}
  \lambda^{e} \dd{E}^{e}\left[f(\vc{X} + \vc{Y}) - f(\vc{X})\right] & + \sum_{A \in 2^{J}\setminus \{\emptyset\}} \lambda^{d}_{A} \dd{E}^d_{A}\left[f(\vc{X} + \vc{Z}) - f(\vc{X})\right]\nonumber\\
  & = \sum_{A \in 2^{J} \setminus \{\emptyset\}} \lambda^{d}_{A} \dd{E}^d_{A} \left[f(\vc{X} + \vc{Y}) - f(\vc{X}) \right].
\end{align}
\end{cor}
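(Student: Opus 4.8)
The plan is to deduce \pro{} — that is, \cort{stationary relation 1} — directly from \thr{stationary relation 1} by disaggregating the single departure expectation $\dd{E}^{d}$ into the contributions of the different departure types $A$. The one fact I need is the disaggregation identity
\[
\lambda^{d}\, \dd{E}^{d}[\varphi(\vc{X},\vc{Y},\vc{Z})] = \sum_{A \in 2^{J}\setminus\{\emptyset\}} \lambda^{d}_{A}\, \dd{E}^d_{A}[\varphi(\vc{X},\vc{Y},\vc{Z})],
\]
valid for every bounded $\varphi:\dd{Z}_+^{3m}\to\dd{R}$. Granting this, I would simply apply it twice to the two $\dd{E}^{d}$ terms appearing in \eq{stationary relation 1}: once with $\varphi(\vc{x},\vc{y},\vc{z}) = f(\vc{x}+\vc{z}) - f(\vc{x})$ and once with $\varphi(\vc{x},\vc{y},\vc{z}) = f(\vc{x}+\vc{y}) - f(\vc{x})$. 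The arrival term $\lambda^{e}\dd{E}^{e}[f(\vc{X}+\vc{Y})-f(\vc{X})]$ is left untouched, and \eq{stationary relation 2} drops out immediately.

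To establish the disaggregation identity I would unfold the definition of $\pi^{d}_{A}$. When $\lambda^{d}_{A}>0$ the definition gives $\lambda^{d}_{A}\pi^{d}_{A}(\vc{x},\vc{y},\vc{z}) = \lambda^{d}\pi^{d}(\vc{x},\vc{y},\vc{z})$ for $\vc{y}\in S_{A}$, while the remark that $\pi^{d}_{A}$ restricts to $\dd{Z}_+^m\times S_A\times\dd{Z}_+^m$ means the $A$-term only registers departure batches supported exactly on $A$. The combinatorial heart of the matter is that the family $\{S_{A}:A\in 2^{J}\setminus\{\emptyset\}\}$ partitions $\dd{Z}_+^m\setminus\{\vc{0}\}$: every nonzero $\vc{y}$ lies in $S_{A}$ for the unique $A=\{i: y_i>0\}$. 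Since a departure instant always carries $\Delta\vc{N}^{d}\neq\vc{0}$, the distribution $\pi^{d}$ places no mass at $\vc{y}=\vc{0}$, so summing the $A$-terms reassembles $\lambda^{d}\pi^{d}$ on all of $\dd{Z}_+^{3m}$; multiplying by $\varphi$ and summing over $(\vc{x},\vc{y},\vc{z})$ then yields the identity. I would also note that the case $\lambda^{d}_{A}=0$ is consistent: the fraction of departures of type $A$ among the first $n$ departures tends to $\lambda^{d}_{A}/\lambda^{d}=0$, which forces $\pi^{d}$ to assign no mass to the corresponding $\vc{y}\in S_{A}$, so both sides of the per-$A$ relation vanish there.

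As an alternative, and more in the spirit of the proof of \thr{stationary relation 1}, I could go back to the sample-path identity \eq{test 3} and split each departure sum $\sum_{\ell=1}^{|\tilde{\vc{N}}^{d}|(t)}$ as $\sum_{A}\sum_{\ell=1}^{\tilde{N}^{d}_{A}(t)}$ by \eq{exclusive 1}; dividing by $t$, letting $t\to\infty$, and invoking \eq{rate 2} together with \lem{single 1} for each $A$-term (and \eq{rate 1} with \lem{stationary 1} for the arrival term) would give \eq{stationary relation 2} directly. I do not expect a genuine obstacle in either route. The only delicate point — exchanging the limit in $n$ with the (now finite) sum over $A$, or equivalently justifying the convergence of each type-$A$ sample average — is already absorbed into \lem{single 1}. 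The single thing I would be careful to spell out is that $\vc{y}=\vc{0}$ contributes nothing to either side, so that the finitely many sets $S_{A}$ genuinely exhaust the support of $\pi^{d}$ in its batch-size argument.
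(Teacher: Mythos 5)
Your proposal is correct and follows essentially the same route as the paper, whose proof of the corollary is exactly the one-line observation that \eq{exclusive 1}, \thr{stationary relation 1} and \lem{single 1} combine to give \eq{stationary relation 2}: the departure expectation $\lambda^{d}\dd{E}^{d}$ is disaggregated over the types $A$ via the definition of $\pi^{d}_{A}$, the fact that the sets $S_{A}$ partition the nonzero batch vectors, and the observation that $\pi^{d}$ carries no mass at $\vc{y}=\vc{0}$. Your write-up merely supplies the details (including the $\lambda^{d}_{A}=0$ case) that the paper leaves implicit, and your alternative sample-path route via \eq{test 3} is equally valid.
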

\begin{remark}{\rm
\label{rem:stationary relation 1}
If $\Delta \tilde{N}^{d}_{i}(t^{d}_{j,n}) = 0$ for all $i \ne j$, then $\lambda^{d}_{A} > 0$ only if $A$ is a singleton. In this case, the summations over $A$ in \eq{stationary relation 2} can be reduced to those over $i \in J$, replacing $A$ by $i$.}
\end{remark}

Until now, our distributional relationship may still be too general because no assumption is made on how the counting processes are generated from $\vc{X}(t)$ and other information. To describe this, a filtration is convenient. Let $\sr{F}_{t}$ be the $\sigma$-field generated by all events up to time $t$, and let $\sr{F}_{t-} = \sigma(\cup_{u < t} \sr{F}_{u})$,
that is, $\sr{F}_{t-}$ is a $\sigma$-field generated by all events before time $t$.
For a stopping time $\tau$, let $\sr{F}_{\tau-} = \sigma (\sr{F}_{0}, \{A \cap \{t < \tau\} \in \sr{F}_{t}\})$, where $\sigma(\sr{A})$ is the $\sigma$-field generated by a family of events $\sr{A}$. Using the filtration, the following assumptions are typically used under the setting i)--v).
\begin{itemize}
\item [(a1)] $t^{e}_{n}, t^{d}_{i,n}$ are stopping times with respect to $\{\sr{F}_{t}; t \ge 0\}$. This can always be realized by choosing a sufficiently large $\sr{F}_{t}$.
\item [(a2)] $\Delta \vc{N}^{e}(t^{e}_{n})$ is independent of $\sr{F}_{t^{e}_{n}-}$. That is, the sizes of batch arrivals are independent of the state of the system just before their arrival epochs.
\item [(a3)] $\Delta |\vc{N}^{d}|(t^{d}_{n}) = 1$. That is, departures singly occur from one queue at a time.
\item [(a4)] $\Delta N^{r}_{j}(t^{d}_{i,n}) \le 1$ for $j \in J$, and $\Delta \vc{N}^{r}(t^{d}_{i,n})$ is in the $\sigma$-field generated by $\sr{F}_{t^{d}_{i,n}-}$ and $\Delta \vc{N}^{d}(t^{d}_{i,n})$.
\end{itemize}

By (a3), $\tilde{N}^{d}_{A}(t) \equiv 0$ if $A$ is not a singleton. Thus, we write $\tilde{N}^{d}_{A}(t)$ as $\tilde{N}^{d}_{i}(t)$ for $A = \{i\}$. Similarly, $\pi^{d}_{A}$ is written as $\pi^{d}_{i}$ for $A = \{i\}$. Under the setting i)--v) and the assumptions (a1)--(a4),  $\Delta N^{r}_{j}(t^{d}_{i,\ell}) \le 1$, and therefore \lem{single 1} yields
\begin{align*}
  \lim_{n \to \infty} \frac 1n \sum_{\ell=1}^{n} 1(\vc{X}^{d}(t^{d}_{i,\ell}) = \vc{x}, \Delta N^{d}_{i}(t^{d}_{i,\ell}) = 1, \Delta N^{r}_{j}(t^{d}_{i,\ell}) = 1) = \pi^{d}_{i}(\vc{x},\vc{e}_{i}, \vc{e}_{j}),
\end{align*}
which is denoted by $\pi^{d}_{ij}(\vc{x})$. We here recall that $\vc{e}_{i} \in \dd{Z}_{+}^{m}$ is the unit vector whose $i$-th entry is one and the other entries are zero. Thus, applying Corollary \cort{stationary relation 1} for $f(\vc{x}) = \vc{z}^{\vc{x}}$, where we recall that $\vc{z}^{\vc{x}} = \prod_{i \in J} z_{i}^{x_{i}}$, we have the following relationship.
\begin{cor}
\label{cor:stationary relation 2}
Under the settings i)--v) and assumptions (a1)--(a4), for $\vc{z} = (z_{1}, \ldots, z_{m})$ satisfying $|z_{i}| \le 1$ for $i \in J$,
\begin{align}
\label{eq:stationary relation 3}
  \lambda^{e} & \big(1 - \dd{E}^e[\vc{z}^{\vc{Y}}] \big)\varphi^{e}(\vc{z}) + \sum_{j \in J} (1-z_{j}) \sum_{i \in J} \lambda^{d}_{i} \varphi^{d}_{ij}(\vc{z}) = \sum_{i \in J} \lambda^{d}_{i} (1 - z_{i}) \varphi^{d}_{i}(\vc{z}),
\end{align}
where
\begin{align*}
 \varphi^{e}(\vc{z}) = \dd{E}^{e}[\vc{z}^{\vc{X}}],\qquad \varphi^{d}_{i}(\vc{z}) = \dd{E}^d_i[\vc{z}^{\vc{X}}], \qquad \varphi^{d}_{i,j}(\vc{z}) = \sum_{\vc{x} \in \dd{Z}_+^m} \vc{z}^{\vc{x}} \pi^{d}_{ij}(\vc{x}), \qquad i,j \in J.
\end{align*}
\end{cor}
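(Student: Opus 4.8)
The plan is to obtain \eq{stationary relation 3} by specializing the identity \eq{stationary relation 2} of Corollary~\cort{stationary relation 1} to the test function $f(\vc{x}) = \vc{z}^{\vc{x}} = \prod_{i \in J} z_i^{x_i}$ and then evaluating each of the three expectations using the assumptions (a1)--(a4). Before substituting I would first invoke (a3): since departures occur singly from one queue at a time, $\lambda^{d}_{A} = 0$ unless $A = \{i\}$ is a singleton, so by \rem{stationary relation 1} every sum $\sum_{A \in 2^{J}\setminus\{\emptyset\}}$ in \eq{stationary relation 2} collapses to $\sum_{i \in J}$ with $\pi^{d}_{\{i\}} = \pi^{d}_{i}$. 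Using the multiplicativity $\vc{z}^{\vc{x}+\vc{y}} = \vc{z}^{\vc{x}}\vc{z}^{\vc{y}}$, this reduces \eq{stationary relation 2} to
\begin{align*}
\lambda^{e}\,\dd{E}^{e}\big[\vc{z}^{\vc{X}}(\vc{z}^{\vc{Y}}-1)\big] + \sum_{i \in J}\lambda^{d}_{i}\,\dd{E}^{d}_{i}\big[\vc{z}^{\vc{X}}(\vc{z}^{\vc{Z}}-1)\big] = \sum_{i \in J}\lambda^{d}_{i}\,\dd{E}^{d}_{i}\big[\vc{z}^{\vc{X}}(\vc{z}^{\vc{Y}}-1)\big].
\end{align*}

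Next I would evaluate the three groups of terms. For the arrival term, assumption (a2) enters decisively: because the batch size $\Delta\vc{N}^{e}(t^{e}_{n}) = \vc{Y}$ is independent of $\sr{F}_{t^{e}_{n}-}$, and hence of $\vc{X}(t^{e}_{n}-) = \vc{X}$, the Palm distribution $\pi^{e}(\vc{x},\vc{y})$ factorizes, so that $\dd{E}^{e}[\vc{z}^{\vc{X}}\vc{z}^{\vc{Y}}] = \varphi^{e}(\vc{z})\,\dd{E}^{e}[\vc{z}^{\vc{Y}}]$ and the arrival term becomes $-\lambda^{e}(1-\dd{E}^{e}[\vc{z}^{\vc{Y}}])\varphi^{e}(\vc{z})$. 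For the right-hand (departure) term, under $\pi^{d}_{i}$ the departing batch is a single customer leaving $Q_{i}$, i.e. $\vc{Y} = \Delta\vc{N}^{d} = \vc{e}_{i}$ by (a3), so $\vc{z}^{\vc{Y}} = z_{i}$ and $\dd{E}^{d}_{i}[\vc{z}^{\vc{X}}(\vc{z}^{\vc{Y}}-1)] = -(1-z_{i})\varphi^{d}_{i}(\vc{z})$, giving $-\sum_{i}\lambda^{d}_{i}(1-z_{i})\varphi^{d}_{i}(\vc{z})$.

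The routing term is the one requiring the most care. Under (a4) each coordinate of $\Delta\vc{N}^{r}(t^{d}_{i,n}) = \vc{Z}$ is at most one, and, reading the departing customer as being routed to at most one queue (so that $\vc{Z} \in \{\vc{0}\}\cup\{\vc{e}_{j} : j \in J\}$), the increment splits linearly as $\vc{z}^{\vc{Z}}-1 = \sum_{j\in J}(z_{j}-1)\,1(\vc{Z}=\vc{e}_{j})$. Weighting by $\vc{z}^{\vc{X}}$ under $\pi^{d}_{i}$ and recalling $\pi^{d}_{ij}(\vc{x}) = \pi^{d}_{i}(\vc{x},\vc{e}_{i},\vc{e}_{j})$ together with $\varphi^{d}_{ij}(\vc{z}) = \sum_{\vc{x}}\vc{z}^{\vc{x}}\pi^{d}_{ij}(\vc{x})$ then yields $\dd{E}^{d}_{i}[\vc{z}^{\vc{X}}(\vc{z}^{\vc{Z}}-1)] = -\sum_{j\in J}(1-z_{j})\varphi^{d}_{ij}(\vc{z})$. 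Substituting the three evaluated groups into the reduced identity, multiplying through by $-1$, and interchanging the order of the double sum $\sum_{i}\sum_{j}$ produces \eq{stationary relation 3}. I expect the main obstacle to be precisely this routing step: justifying the linear split of $\vc{z}^{\vc{Z}}-1$ over destinations $j$ requires that a single departure generate at most one internal arrival (the natural constraint $|\Delta\vc{N}^{r}(t^{d}_{i,n})| \le |\Delta\vc{N}^{d}(t^{d}_{i,n})| = 1$); without it the product $\vc{z}^{\vc{Z}}$ would not reduce to a sum over single destinations and the $\varphi^{d}_{ij}$ would fail to capture the full increment. A secondary point worth checking is that the pathwise conditional independence in (a2) genuinely passes to a product-form factorization of the limiting (Palm) distribution $\pi^{e}$, which is what legitimizes the splitting of $\dd{E}^{e}[\vc{z}^{\vc{X}}\vc{z}^{\vc{Y}}]$.
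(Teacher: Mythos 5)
Your proposal is correct and follows essentially the same route as the paper, whose entire proof consists of applying Corollary~\ref{cor:stationary relation 1} with $f(\vc{x}) = \vc{z}^{\vc{x}}$ after using (a3) to collapse the sums over $A$ to singletons and (a4) to introduce $\pi^{d}_{ij}(\vc{x}) = \pi^{d}_{i}(\vc{x},\vc{e}_{i},\vc{e}_{j})$. The two subtleties you flag --- that a single departure generates at most one internal arrival, so that $\vc{z}^{\vc{Z}}-1$ splits linearly over destinations, and that the pathwise independence in (a2) passes to a product form under the Palm limit $\pi^{e}$ --- are precisely the points the paper leaves implicit, and your handling of them agrees with the paper's intended reading.
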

\begin{remark}{\rm
\label{rem:stationary relation 2}
Under the assumptions of this corollary, the routing of departing customers may depend on all queue lengths in the network.}
\end{remark}

Corollary \cort{stationary relation 2} is specialized to Corollary \cort{stationary relation 3} if external arrivals to queues occur one at a time. Namely,
\begin{itemize}
\item [vi)] No simultaneous arrivals occur, and there exist finite numbers (some, but not all, possibly zero) $\lambda^{e}_{k}$ for $k \in J$ such that
\begin{align}
\label{eq:rate 3}
  \lambda^{e}_{k} = \lim_{t \to \infty} \frac 1t \tilde{N}^{e}_{k}(t), \quad a.s., \qquad k \in J.
\end{align}
\end{itemize}

\begin{cor}
\label{cor:stationary relation 3}
Under the assumptions of Corollary \cort{stationary relation 2}, assume that vi) also holds. Define the $\pi^{e}_{k}$ as
\begin{align*}
  \pi^{e}_{k}(\vc{x},y_{k}) = \left\{
\begin{array}{ll}
  \frac {\lambda^{e}}{\lambda^{e}_{k}} \pi^{e}(\vc{x},y_{k}), \quad & \lambda_{k}^{e} > 0,\\
 0, & \lambda_{k}^{e} = 0,
\end{array}
\right.
\end{align*}
then, for $k \in \{i \in J| \lambda_{i}^{e} > 0\}$, the $\pi^{e}_{k}$
is a probability distribution on $\dd{Z}_+^{m+1}$, and \eq{stationary relation 3} becomes
\begin{align}
\label{eq:stationary relation 4}
  \sum_{k \in J} \lambda^{e}_{k} \big(1 - \dd{E}^e\big[z_{k}^{Y_{k}}\big] \big) \varphi^{e}_{k}(\vc{z}) + \sum_{j \in J} (1-z_{j}) \sum_{i \in J} \lambda^{d}_{i} \varphi^{d}_{ij}(\vc{z}) = \sum_{i \in J} \lambda^{d}_{i} (1 - z_{i}) \varphi^{d}_{i}(\vc{z}),
\end{align}
where $\varphi^{e}_{k}$ is the generating function of $\vc{X}$ under the conditional distribution $\pi^{e}_{k}$.
\end{cor}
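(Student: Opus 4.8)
The plan is to exploit the fact that Corollary \cort{stationary relation 3} differs from Corollary \cort{stationary relation 2} only in its external-arrival term. The two departure terms $\sum_{j\in J}(1-z_j)\sum_{i\in J}\lambda^d_i\varphi^d_{ij}(\vc{z})$ and $\sum_{i\in J}\lambda^d_i(1-z_i)\varphi^d_i(\vc{z})$ in \eq{stationary relation 3} are untouched by assumption vi), so the whole task reduces to re-expressing $\lambda^e\big(1-\dd{E}^e[\vc{z}^{\vc{Y}}]\big)\varphi^e(\vc{z})$ as $\sum_{k\in J}\lambda^e_k\big(1-\dd{E}^e[z_k^{Y_k}]\big)\varphi^e_k(\vc{z})$. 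Rather than decompose the already-factored form (a product of two mixtures, which does not split cleanly per queue), I would return to the unfactored arrival contribution $\lambda^e\dd{E}^e[\vc{z}^{\vc{X}+\vc{Y}}-\vc{z}^{\vc{X}}]$ coming from \thr{stationary relation 1} with $f(\vc{x})=\vc{z}^{\vc{x}}$, since this is the expectation of a single function of $(\vc{X},\vc{Y})$ and therefore decomposes as a genuine mixture over the queue at which the arrival occurs.

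First I would set up the arrival-side decomposition parallel to \eq{exclusive 1}. Under vi) external arrivals never coincide, so the simple point process $|\tilde{\vc{N}}^e|$ splits as $|\tilde{\vc{N}}^e|(t)=\sum_{k\in J}\tilde N^e_k(t)$ with mutually non-simultaneous increments, whence $\lambda^e=\sum_{k\in J}\lambda^e_k$ by \eq{rate 1} and \eq{rate 3}. I would then establish the arrival analogue of \lem{single 1}: running the same argument used there, with the arrival epochs $t^e_{k,n}$ in place of $t^d_{A,\ell}$, yields conditional distributions $\pi^e_k$ for which the empirical averages of any bounded function along the queue-$k$ arrival epochs converge a.s.\ to $\dd{E}^e_k$, and consequently $\pi^e=\sum_{k\in J}(\lambda^e_k/\lambda^e)\pi^e_k$. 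The normalization $\sum_{\vc{x},y_k}\pi^e(\vc{x},y_k)=\lambda^e_k/\lambda^e$ then makes each $\pi^e_k$ a probability distribution on $\dd{Z}_+^{m+1}$ when $\lambda^e_k>0$, while the case $\lambda^e_k=0$ is absorbed by the convention $\pi^e_k\equiv 0$.

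Feeding this mixture into the arrival term gives $\lambda^e\dd{E}^e[\vc{z}^{\vc{X}}(\vc{z}^{\vc{Y}}-1)]=\sum_{k\in J}\lambda^e_k\dd{E}^e_k[\vc{z}^{\vc{X}}(\vc{z}^{\vc{Y}}-1)]$. Within the $k$-th summand, vi) forces the arrival batch to live on coordinate $k$ alone, so $\vc{z}^{\vc{Y}}=z_k^{Y_k}$; and assumption (a2), which makes $\Delta\vc{N}^e(t^e_n)$ independent of $\sr{F}_{t^e_n-}$ (hence the batch independent of the pre-arrival state, and this independence survives conditioning on the arrival queue, since that queue is itself a function of $\Delta\vc{N}^e$), lets me factor $\dd{E}^e_k[\vc{z}^{\vc{X}}z_k^{Y_k}]=\varphi^e_k(\vc{z})\,\dd{E}^e[z_k^{Y_k}]$, the second factor being the batch-size PGF at a queue-$k$ arrival. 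Each summand thus equals $-\lambda^e_k\big(1-\dd{E}^e[z_k^{Y_k}]\big)\varphi^e_k(\vc{z})$, and substituting back into \eq{stationary relation 3} in place of the single arrival term produces exactly \eq{stationary relation 4}.

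The one genuinely non-routine step is the arrival version of \lem{single 1}: I must justify that restricting the Cesàro averages to the subsequence of queue-$k$ arrival epochs, a subsequence of asymptotic density $\lambda^e_k/\lambda^e$, still converges a.s., and that the limit is a bona fide probability measure. This is precisely the interchange-of-limits difficulty already flagged for \lem{stationary 1} and \lem{single 1}, and I would dispose of it by invoking the same device used in those proofs rather than reproving it from scratch. Everything afterwards, namely the mixture decomposition, the reduction $\vc{z}^{\vc{Y}}=z_k^{Y_k}$, and the independence-based factorization, is routine algebra, with $\lambda^e_k=0$ a trivial edge case.
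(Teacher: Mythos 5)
Your proposal is correct and is essentially the paper's own route: the paper states this corollary without an explicit proof, treating it as an immediate specialization of Corollary~\cort{stationary relation 2}, and the argument it implicitly relies on is precisely what you spell out — decompose the arrival side per queue via the arrival analogue of \lem{single 1} (using vi) to get $\lambda^{e}=\sum_{k}\lambda^{e}_{k}$ and $\pi^{e}=\sum_{k}(\lambda^{e}_{k}/\lambda^{e})\pi^{e}_{k}$), reduce $\vc{z}^{\vc{Y}}$ to $z_{k}^{Y_{k}}$ under $\pi^{e}_{k}$, and factor with (a2). Your observation that one should work from the unfactored arrival term $\lambda^{e}\dd{E}^{e}[\vc{z}^{\vc{X}}(\vc{z}^{\vc{Y}}-1)]$ of \thr{stationary relation 1} rather than the already-factored term in \eq{stationary relation 3} is exactly the right way to make the paper's ``\eq{stationary relation 3} becomes \eq{stationary relation 4}'' rigorous.
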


Corollary~\ref{cor:stationary relation 3} immediately implies the following corollary.

\begin{cor}
\label{cor:stationary relation 4}
Under the assumptions of Corollary \cort{stationary relation 3}, if the event $\{\Delta N^{r}_{j}(t^{d}_{i,\ell}) = 1\}$ is independent of $\sr{F}_{t^{d}_{i,\ell}-}$, then there exist $p_{ij} \ge 0$ such that $\pi^{d}_{i}(\vc{x},1, \vc{e}_{j}) = \pi^{d}_{i}(\vc{x},1) p_{ij}$, and \eq{stationary relation 4} becomes
\begin{align}
\label{eq:stationary relation 5}
  \sum_{k \in J_{e}} \lambda^{e}_{k} \big(1 - \dd{E}^e\big[z_{k}^{Y_{k}}\big] \big)\varphi^{e}_{k}(\vc{z}) + \sum_{i \in J} \lambda^{d}_{i} \varphi^{d}_{i}(\vc{z}) \sum_{j \in J} p_{ij} (1-z_{j}) = \sum_{i \in J} \lambda^{d}_{i} (1 - z_{i}) \varphi^{d}_{i}(\vc{z}).
\end{align}
\end{cor}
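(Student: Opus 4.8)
The plan is to read off \eq{stationary relation 5} from \eq{stationary relation 4} of Corollary~\cort{stationary relation 3}; the only new ingredient is the hypothesis that, at each departure epoch $t^{d}_{i,\ell}$ from queue $i$, the routing indicator $\{\Delta N^{r}_{j}(t^{d}_{i,\ell}) = 1\}$ is independent of the pre-departure history $\sr{F}_{t^{d}_{i,\ell}-}$. The work therefore splits into two parts: first, turning this pointwise independence into the factorization $\pi^{d}_{i}(\vc{x},1,\vc{e}_{j}) = \pi^{d}_{i}(\vc{x},1)\, p_{ij}$ of the sample-average (Palm) distributions; and second, substituting that factorization into \eq{stationary relation 4} and collecting terms.

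For the first part I would begin by noting that, under (a3), a departure at $t^{d}_{i,\ell}$ changes only queue $i$, so \eq{dynamics 1} and \eq{dynamics 2} give $\vc{X}^{d}(t^{d}_{i,\ell}) = \vc{X}(t^{d}_{i,\ell}-) - \vc{e}_{i}$; since the left limit $\vc{X}(t^{d}_{i,\ell}-)$ is $\sr{F}_{t^{d}_{i,\ell}-}$-measurable, so is the event $\{\vc{X}^{d}(t^{d}_{i,\ell}) = \vc{x}\}$, and hence this event is independent of $\{\Delta N^{r}_{j}(t^{d}_{i,\ell}) = 1\}$. The delicate point is that \lem{single 1} only delivers convergence of the \emph{joint} sample averages, so the pointwise independence must be passed through the ergodic limit. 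I would do this with a martingale argument: let $p_{ij}$ be the long-run fraction of departures from $i$ routed to $j$ (which exists a.s.\ by \lem{single 1} applied to an $h$ depending on the routing coordinate alone), and set $D_{\ell} = 1(\Delta N^{r}_{j}(t^{d}_{i,\ell}) = 1) - p_{ij}$. The independence assumption forces $\dd{E}[D_{\ell}\mid\sr{F}_{t^{d}_{i,\ell}-}]=0$, so the $D_{\ell}$ form a bounded martingale-difference sequence, while $1(\vc{X}^{d}(t^{d}_{i,\ell}) = \vc{x})$ is $\sr{F}_{t^{d}_{i,\ell}-}$-measurable and bounded. A martingale strong law then gives $\frac1n\sum_{\ell=1}^{n} 1(\vc{X}^{d}(t^{d}_{i,\ell}) = \vc{x})\,D_{\ell} \to 0$ a.s., and combining this with $p_{ij}\,\frac1n\sum_{\ell=1}^{n}1(\vc{X}^{d}(t^{d}_{i,\ell}) = \vc{x}) \to p_{ij}\,\pi^{d}_{i}(\vc{x},1)$ (again \lem{single 1}) yields exactly $\pi^{d}_{i}(\vc{x},1,\vc{e}_{j}) = p_{ij}\,\pi^{d}_{i}(\vc{x},1)$.

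The second part is then routine. Multiplying by $\vc{z}^{\vc{x}}$ and summing over $\vc{x}$ gives $\varphi^{d}_{ij}(\vc{z}) = p_{ij}\,\varphi^{d}_{i}(\vc{z})$, so the middle term of \eq{stationary relation 4} becomes $\sum_{j\in J}(1-z_{j})\sum_{i\in J}\lambda^{d}_{i}p_{ij}\varphi^{d}_{i}(\vc{z}) = \sum_{i\in J}\lambda^{d}_{i}\varphi^{d}_{i}(\vc{z})\sum_{j\in J}p_{ij}(1-z_{j})$, which is precisely the middle term of \eq{stationary relation 5}; the external-arrival and right-hand terms are unchanged, and the sum over $J_{e}=\{k:\lambda^{e}_{k}>0\}$ coincides with the sum over $J$ because the omitted terms carry $\lambda^{e}_{k}=0$. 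I expect the main obstacle to be the first part, namely justifying that pointwise conditional independence survives the passage to the sample-average limit (and, within the stationary framework, identifying a single constant $p_{ij}$ as the conditional routing probability); the substitution and bookkeeping in the second part are immediate once the factorization is in hand.
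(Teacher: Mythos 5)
Your proof is correct, but note that the paper itself offers no proof at all: it simply asserts that Corollary~\cort{stationary relation 3} ``immediately implies'' Corollary~\cort{stationary relation 4}. The intended argument is exactly your second part --- the factorization $\pi^{d}_{i}(\vc{x},1,\vc{e}_{j})=\pi^{d}_{i}(\vc{x},1)\,p_{ij}$ gives $\varphi^{d}_{ij}(\vc{z})=p_{ij}\varphi^{d}_{i}(\vc{z})$, and regrouping the double sum in \eq{stationary relation 4} yields \eq{stationary relation 5}. Your first part, the martingale strong-law justification that pointwise independence of the routing indicator from $\sr{F}_{t^{d}_{i,\ell}-}$ passes to the sample-average (Palm) limit, is precisely the step the paper glosses over, and your argument for it is sound: under (a3) one has $\vc{X}^{d}(t^{d}_{i,\ell})=\vc{X}(t^{d}_{i,\ell}-)-\vc{e}_{i}$, so $\{\vc{X}^{d}(t^{d}_{i,\ell})=\vc{x}\}$ is indeed $\sr{F}_{t^{d}_{i,\ell}-}$-measurable, and the bounded martingale-difference SLLN decouples it from $D_{\ell}$ in the limit. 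The one caveat --- which you correctly flag --- is that the stated independence hypothesis only gives $\dd{E}\big[1(\Delta N^{r}_{j}(t^{d}_{i,\ell})=1)\mid\sr{F}_{t^{d}_{i,\ell}-}\big]=\dd{P}(\Delta N^{r}_{j}(t^{d}_{i,\ell})=1)$, a quantity that may a priori vary with $\ell$; your increments $D_{\ell}$ are centered only if this unconditional probability equals the single constant $p_{ij}$ for every $\ell$. Without that, the factorization can genuinely fail (e.g.\ routing that is deterministic but alternates with $\ell$, hence trivially independent of the past at each epoch, yet correlated with the state sequence). This is an imprecision in the corollary's statement rather than a gap in your proof: the intended hypothesis is i.i.d.-type routing with fixed probabilities $p_{ij}$, and under that reading your argument is complete and supplies rigor the paper leaves implicit.
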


\begin{remark}{\rm
In Section~\ref{sec:3} we shall present several applications of the above theorem and corollaries.
In particular, the polling result (\ref{eq:simple}) of Section~\ref{sec:2} is there shown to be a special case of Corollary~\ref{cor:stationary relation 1}.
\\
Notice that the setup of this section includes the finite buffer case. This is done by having no arrivals to a queue during times in which it is saturated. This type of dependence is allowed by our setup. Some results for the single server queue with finite capacity are contained in \cite{HebuterneRosenberg}.}
\end{remark}

\section{Distributional relationship up to a given time}
\label{sect:non stationary}

The purpose of this section is to derive a non-stationary version of \thr{stationary relation 1}, a distributional relationship
{\em up to a given time}.
We adopt the settings i)--iv)
of \sectn{formal}, and consider the process $\vc{Z}(t)$ introduced in the beginning of that section. We first define the expected relative frequencies for bounded test functions $g, h$ from $\dd{Z}_+^{2m}, \dd{Z}_+^{3m}$ to $\dd{R}$ up to time $t$ as
\begin{align*}
 & R^{e}_{t}g = \frac 1{|\tilde{\vc{N}}^{e}|(t)} \sum_{n=1}^{|\tilde{\vc{N}}^{e}|(t)} g(\vc{X}(t^{e}_{n}-),\Delta \vc{N}^{e}(t^{e}_{n})) 1(|\tilde{\vc{N}}^{e}|(t) > 0),\\
 & R^{d}_{t} h = \frac 1{|\tilde{\vc{N}}^{d}|(t)} \sum_{n=1}^{|\tilde{\vc{N}}^{e}|(t)} h(\vc{X}^{d}(t^{d}_{n}),\Delta \vc{N}^{d}(t^{d}_{n}),\Delta \vc{N}^{r}(t^{d}_{n})) 1(|\tilde{\vc{N}}^{d}|(t) > 0).
\end{align*}
For each bounded function $f:\dd{Z}_+^{m}\to\dd{R}$, we define the following test functions.
\begin{align*}
   & g^{e}(\vc{x},\vc{y}) = f(\vc{x}), \qquad g^{e}_+(\vc{x}, \vc{y}) = f(\vc{x} + \vc{y}),\\
   & h^{d}(\vc{x},\vc{y},\vc{z}) = f(\vc{x}), \qquad h^{d}_-(\vc{x},\vc{y},\vc{z}) = f(\vc{x} + \vc{y}), \qquad h^{d}_{+}(\vc{x},\vc{y},\vc{z}) = f(\vc{x} + \vc{z}).
\end{align*}
 Let
\begin{align*}
  \lambda^{e}(t) = \frac 1t |\tilde{\vc{N}}^{e}|(t), \qquad \lambda^{d}(t) = \frac 1t |\tilde{\vc{N}}^{d}|(t).
\end{align*}

Then, \eq{test 3} yields the following lemma.
\begin{lemma}
\label{lem:nonstationary relation 1}
Under the setting i)--iv), for any bounded function $f:\dd{Z}_+^{m}\to\dd{R}$, we have, for any $t > 0$,
\begin{align}
\label{eq:nonstationary relation 1}
  \lambda^{e}(t) \big(R^{e}_{t} g^{e}_+ & - R^{e}_{t} g^{e}\big) + \lambda^{d}(t) \big(R^{d}_{t} h^{d}_+ - R^{d}_{t} h^d\big)\nonumber\\
 &  - \lambda^{d}(t) \big(R^{d}_{t} h^{d}_- - R^{d}_{t} h^{d}\big) = \frac 1t \big(f(\vc{X}(t)) - f(\vc{X}(0))\big).
\end{align}
\end{lemma}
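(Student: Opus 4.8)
The plan is to recognize that Lemma~\ref{lem:nonstationary relation 1} is simply the finite-$t$ (pre-limit) form of the identity already established in the proof of \thr{stationary relation 1}. The entire analytic content sits in equation \eq{test 3}, which was derived \emph{for every fixed $t$} without any limiting argument. So rather than re-deriving the path decomposition, I would start directly from \eq{test 3} and perform bookkeeping to match each sum against the $R^u_t$ operators and the test functions $g^e, g^e_+, h^d, h^d_-, h^d_+$.

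Concretely, I would rewrite the three sums appearing in \eq{test 3} using the definitions of the test functions. The first sum, $\sum_{\ell=1}^{|\tilde{\vc{N}}^e|(t)} (f(\vc{X}(t^e_\ell-)+\Delta\vc{N}^e(t^e_\ell)) - f(\vc{X}(t^e_\ell-)))$, is exactly $\sum_\ell (g^e_+ - g^e)$ evaluated at $(\vc{X}(t^e_\ell-),\Delta\vc{N}^e(t^e_\ell))$. The second sum on the left of \eq{test 3}, involving $f(\vc{X}^d+\Delta\vc{N}^r) - f(\vc{X}^d)$, matches $\sum_\ell (h^d_+ - h^d)$ evaluated at the departure data $(\vc{X}^d(t^d_\ell),\Delta\vc{N}^d(t^d_\ell),\Delta\vc{N}^r(t^d_\ell))$, since $h^d_+(\vc{x},\vc{y},\vc{z}) = f(\vc{x}+\vc{z})$ and $h^d(\vc{x},\vc{y},\vc{z}) = f(\vc{x})$. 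The right-hand sum, involving $f(\vc{X}^d+\Delta\vc{N}^d) - f(\vc{X}^d)$, matches $\sum_\ell (h^d_- - h^d)$ because $h^d_-(\vc{x},\vc{y},\vc{z}) = f(\vc{x}+\vc{y})$. I would then divide \eq{test 3} through by $t$ and insert the definitions $R^e_t g = \frac{1}{|\tilde{\vc{N}}^e|(t)}\sum_n g(\cdots)$, $\lambda^e(t) = \frac1t |\tilde{\vc{N}}^e|(t)$, so that each $\frac1t\sum_\ell(\cdots)$ becomes $\lambda^e(t) R^e_t(\cdots)$ or $\lambda^d(t) R^d_t(\cdots)$; finally I would move the right-hand departure sum to the left to obtain the stated combination with the minus sign on $(R^d_t h^d_- - R^d_t h^d)$.

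The only genuinely delicate point is the treatment of the empty-sum / zero-count case, which is why the indicators $1(|\tilde{\vc{N}}^u|(t) > 0)$ appear in the definitions of $R^e_t$ and $R^d_t$. When $|\tilde{\vc{N}}^u|(t) = 0$ the factor $\lambda^u(t) = \frac1t|\tilde{\vc{N}}^u|(t)$ vanishes, so the product $\lambda^u(t) R^u_t(\cdots)$ is well-defined and equals zero, which is consistent with the corresponding sum in \eq{test 3} being empty; I would remark that the indicator guarantees $R^u_t$ is finite (no division by zero) and that multiplying by the vanishing rate recovers the correct value, so the identity holds for \emph{every} $t>0$ with no exceptional cases. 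I expect this algebraic matching, rather than any inequality or limit, to be the whole of the argument, and the main obstacle is purely notational: ensuring the signs and the pairing of $h^d_-$ versus $h^d_+$ with the routing term $\Delta\vc{N}^r$ and the departure term $\Delta\vc{N}^d$ are correctly read off from \eq{test 3}.
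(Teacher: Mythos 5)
Your proposal is correct and is exactly the paper's argument: the paper proves this lemma with the single remark that \eq{test 3} (already established pathwise for every fixed $t$ in the proof of \thr{stationary relation 1}) yields it, and your bookkeeping --- matching the three sums to $g^e_+, g^e, h^d_+, h^d_-, h^d$, dividing by $t$, and noting that $\lambda^u(t)R^u_t(\cdot)$ correctly handles the empty-count case --- is precisely what that remark leaves implicit.
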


We may interpret \lem{nonstationary relation 1} as a transient version of \thr{stationary relation 1}. It is notable that \eq{nonstationary relation 1} holds without any stability condition, and its right-hand side vanishes as $t \to \infty$ at most in linear order of $t^{-1}$ because $f$ is bounded. If there exists a unique probability measure such that $(\vc{X}(t), \Delta \vc{N}^{e}(t), \Delta \vc{N}^{d}(t), \Delta \vc{N}^{r}(t))$ is stationary, then $R^{e}_{t} g, R^{d}_{t}h$ converge to the corresponding expectations under the Palm distributions involving $|\tilde{\vc{N}}^{e}|, |\tilde{\vc{N}}^{d}|$, respectively. Thus, we have
\begin{align*}
 & \lim_{t \to \infty} R^{e}_{t} g^{e} = \dd{E}^{e}f(\vc{X}), \qquad \lim_{t \to \infty} R^{e}_{t} g^{e}_+ = \dd{E}^{e}f(\vc{X} + \vc{Y}),\\
 & \lim_{t \to \infty} R^{d}_{t} h^{d}(\vc{x}) = \dd{E}^{d}f(\vc{X}), \qquad \lim_{t \to \infty} R^{d}_{t} h^{d}_- = \dd{E}^{e}f(\vc{X} + \vc{Y}),\\
 & \lim_{t \to \infty} R^{d}_{t} h^{d}_+ = \dd{E}^{d}f(\vc{X} + \vc{Z}),
\end{align*}
and we recover \eq{stationary relation 1} from \eq{nonstationary relation 1}. Corollary \cort{stationary relation 1}, \eq{stationary relation 3} and \eq{stationary relation 4} are similarly obtained. We omit the routine details.

\section{Some special cases and applications}
\label{sec:3}
In this section we consider several applications of the theorem and corollaries of Section~\ref{sect:formal}.
We first note that, if nonzero $N^{e}_{k}$ for $k \in J$ are independent compound Poisson processes,
then by PASTA the embedded stationary distributions $\pi^{e}$ and $\pi^{e}_{k}$ are identical with the time stationary distributions.
\\

\noindent
{\em Case $1$: An $m$-class queue with batch arrivals}
\\
We consider an $m$-class single-node service facility, with $m\geq 1$.
We allow multiple servers.
Customers arrive according to a Poisson process, possibly in batches.
Customers of class $i$ require service at the service facility according to service time distribution $B_i(\cdot)$,
$i \in J$.
These distributions are assumed to be continuous, but not otherwise specified.
No customers are lost; there is an infinite waiting room.
After completion of their service, customers immediately leave.
We assume that the steady-state joint queue-length distribution
(numbers of customers of all classes in the system) exists. Its PGF is denoted by
$L(\bz)$.
We also again (as in Section~\ref{sec:2}) denote the PGF of the steady-state
joint queue-length distributions immediately after
departure epochs of a class $i$-customer by $S^c_i(\bz)$, $i \in J$.
We do not specify according to which service discipline the customers are served;
polling with FCFS within each class is just one of many options.

\begin{theorem}
\label{thm:batcharrival}
Consider the above-described $m$-class single-node service facility.
Assume that
customers arrive according to a batch Poisson process with rate $\lambda$
and that customers are served individually, in some non-specified order.
Let an arbitrary batch arrival have size $\bG = (G_1,\dots,G_m)$ with
PGF $\ee [\bz^{\bG}] = \ee [z_1^{G_1} \dots z_m^{G_m}]$.
Then the following relation holds between the PGF $L(\bz)$
and the PGFs $S^c_i(\bz)$,
$i \in J$:
\begin{equation}
(1 - \ee [\bz^{\bG}]) L(\bz) = \sum_{i=1}^m (1-z_i) \ee G_i S^c_i(\bz).
\label{eq:simple3}
\end{equation}
\end{theorem}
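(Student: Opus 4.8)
The plan is to obtain \eq{simple3} as a specialization of Corollary~\cort{stationary relation 2}, equation \eq{stationary relation 3}, which already accommodates batch external arrivals. First I would verify that the present model meets the hypotheses i)--v) and (a1)--(a4). The batch Poisson input makes the batch-arrival epochs simple with rate $\lambda^{e} = \lambda$, and the i.i.d.\ batch-size vectors $\bG$, being independent of the arrival history, give (a2); the assumed continuity of the service time distributions ensures that no two service completions coincide and that no departure coincides with an arrival, which is exactly ii) and (a3); and the existence of the steady-state joint queue-length distribution supplies the ergodic conditions iii), iv), v). The decisive structural feature is that served customers leave immediately, so there is no internal routing: $\vc{N}^{r}(t) \equiv \vc{0}$, whence $\Delta \vc{N}^{r}(t^{d}_{i,\ell}) = \vc{0}$ at every departure epoch, and (a4) holds trivially.

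Next I would exploit $\vc{N}^{r} \equiv \vc{0}$ in \eq{stationary relation 3}. Since $\pi^{d}_{ij}$ is carried by the event $\{\Delta N^{r}_{j} = 1\}$, which never occurs, every $\varphi^{d}_{ij}(\vc{z})$ vanishes and the entire middle double sum disappears, leaving
\[
  \lambda\big(1 - \dd{E}^{e}[\vc{z}^{\vc{Y}}]\big)\varphi^{e}(\vc{z}) = \sum_{i \in J} \lambda^{d}_{i}(1 - z_{i})\varphi^{d}_{i}(\vc{z}).
\]
I would then identify each factor with its counterpart in \eq{simple3}: the batch-size vector $\vc{Y}$ under $\dd{E}^{e}$ is $\bG$, so $\dd{E}^{e}[\vc{z}^{\vc{Y}}] = \ee[\vc{z}^{\bG}]$; because the batch-arrival epochs form a Poisson process, PASTA gives $\varphi^{e}(\vc{z}) = \dd{E}^{e}[\vc{z}^{\vc{X}}] = L(\vc{z})$; and by definition $\varphi^{d}_{i}(\vc{z}) = S^{c}_{i}(\vc{z})$.

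It then remains to pin down the per-class departure rate, which I expect to be the main point requiring care. I would argue by flow conservation in steady state: every arriving class-$i$ customer eventually departs exactly once, so the long-run rate of class-$i$ departures equals the long-run rate of class-$i$ arrivals. For batch Poisson input of rate $\lambda$ with batch-size vector $\bG$, the latter rate is $\lambda\,\ee G_{i}$, and hence $\lambda^{d}_{i} = \lambda\,\ee G_{i}$; conditions iii) and v) guarantee that both rates are well-defined almost surely, and the assumed stability makes them equal. Substituting $\lambda^{d}_{i} = \lambda\,\ee G_{i}$ into the displayed identity and cancelling the common factor $\lambda$ produces exactly \eq{simple3}.
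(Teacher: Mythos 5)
Your proposal is correct and follows essentially the same route as the paper's own (one-line) proof: invoke Corollary~\ref{cor:stationary relation 2}, kill the routing terms since $\vc{N}^{r}\equiv\vc{0}$, and apply PASTA to identify $\varphi^{e}(\vc{z})$ with $L(\vc{z})$. Your explicit verification of the hypotheses and the flow-conservation argument giving $\lambda^{d}_{i}=\lambda\,\ee G_{i}$ merely spell out details the paper leaves implicit.
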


\begin{proof}
After using PASTA,
Theorem \ref{thm:batcharrival} is a special case of \eq{stationary relation 3} of Corollary \cort{stationary relation 2} in which there is no routing.
\end{proof}
\begin{remark}{\rm
Special cases of the above theorem are obtained by assuming that batches always contain
only customers of one type. For the special case that batches have just one customer of class $i$ with probability
$\frac{\lambda_i}{\lambda}$, $i \in J$, (\ref{eq:simple3}) reduces to (\ref{eq:simple}) that was obtained for the polling system
that provided the initial motivation for the present study
(but (\ref{eq:simple}) obviously holds for a much more general class of service disciplines).}
\end{remark}
\noindent
{\em Case $2$: Generalization of Theorem~\ref{thm:batcharrival} to the case of batch services}
\\
The following theorem generalizes the main result in \cite{Hebuterne}, but is a special
case of Theorem 2 of \cite{Takine} which allows a more general arrival process (but in that theorem batch service is not considered).
\begin{theorem}
\label{thm:batchservice}
Consider the $m$-class single-node service facility of Theorem~\ref{thm:batcharrival},
with the additional assumption that
customers of class-$i$ are always served in batches of  fixed size $K_i$,
$i \in J$; the start of a service of class-$i$ customers is delayed until
$K_i$ customers are present.
Then the following relation holds between the PGF $L(\bz)$
and the PGFs $S^c_i(\bz)$,
$i \in J$:
\begin{equation}
(1 - \ee [\bz^{\bG}]) L(\bz) = \sum_{i=1}^m \frac{1-z_i^{K_i}}{K_i} \ee G_i S^c_i(\bz).
\label{eq:simple4}
\end{equation}
\end{theorem}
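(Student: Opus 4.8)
The plan is to recognize \eq{simple4} as an instance of Corollary~\cort{stationary relation 1}, rather than of Corollary~\cort{stationary relation 2} which underlies Theorem~\ref{thm:batcharrival}. The reason for stepping back one level is that batch service violates assumption (a3): when a class-$i$ batch completes, $K_i$ customers leave at once, so $\Delta N^{d}_{i} = K_i$ rather than $1$. Each departing batch nevertheless consists of a single class, so in the notation of setting v) only the singletons $A = \{i\}$ carry positive rate, with $\Delta \vc{N}^{d} = K_i \vc{e}_i$ at such epochs. Since customers leave immediately upon service completion there is no routing, i.e.\ $\Delta \vc{N}^{r} \equiv \vc{0}$, hence $\vc{X}^{d} = \vc{X}$ and the entire middle sum $\sum_{A} \lambda^{d}_{A} \dd{E}^{d}_{A}[f(\vc{X}+\vc{Z}) - f(\vc{X})]$ in \eq{stationary relation 2} vanishes.

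With these observations \eq{stationary relation 2} collapses to
\[
  \lambda^{e} \dd{E}^{e}\big[f(\vc{X}+\vc{Y}) - f(\vc{X})\big] = \sum_{i \in J} \lambda^{d}_{i} \dd{E}^{d}_{i}\big[f(\vc{X}+\vc{Y}) - f(\vc{X})\big],
\]
where $\vc{Y}$ denotes the arriving batch $\bG$ under $\dd{E}^{e}$ and the departing batch $K_i \vc{e}_i$ under $\dd{E}^{d}_{i}$. Next I would take the test function $f(\vc{x}) = \vc{z}^{\vc{x}}$. On the left, because arrivals form a batch Poisson process, PASTA gives $\dd{E}^{e}[\vc{z}^{\vc{X}}] = L(\vc{z})$, and assumption (a2) lets me factor the batch size off the pre-arrival state, so the left-hand side equals $\lambda^{e} L(\vc{z})\big(\ee[\vc{z}^{\bG}] - 1\big) = -\lambda\,(1 - \ee[\vc{z}^{\bG}])\, L(\vc{z})$, with $\lambda^{e} = \lambda$. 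On the right, the departing batch contributes $\vc{z}^{K_i \vc{e}_i} = z_i^{K_i}$, and $\dd{E}^{d}_{i}[\vc{z}^{\vc{X}}]$ is, by its definition as the post-departure PGF, precisely $S^{c}_{i}(\vc{z})$; the right-hand side thus becomes $\sum_{i} \lambda^{d}_{i}\,(z_i^{K_i} - 1)\, S^{c}_{i}(\vc{z})$.

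It remains to identify the batch-departure rate $\lambda^{d}_{i}$, which is the one genuinely new ingredient beyond Theorem~\ref{thm:batcharrival}. A flow-balance argument at the level of individual class-$i$ customers does this: in steady state the class-$i$ arrival rate $\lambda\,\ee G_i$ equals the class-$i$ departure rate $K_i \lambda^{d}_{i}$, so $\lambda^{d}_{i} = \lambda\,\ee G_i / K_i$. Substituting this and dividing through by $-\lambda$ turns the balance into exactly \eq{simple4}, the factor $K_i$ entering once through $z_i^{K_i}$ (the removed batch) and once through $\lambda^{d}_{i}$ (the batch rate), the two combining into the weight $\tfrac{1 - z_i^{K_i}}{K_i}$. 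I expect the work to be bookkeeping rather than conceptual; the only step requiring a real argument, as opposed to substitution, is justifying $K_i \lambda^{d}_{i} = \lambda\,\ee G_i$ rigorously, which one obtains from the a.s.\ limits \eq{rate 1} and \eq{rate 2} together with conservation of class-$i$ customers over $[0,t]$.
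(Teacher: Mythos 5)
Your proposal is correct and follows essentially the same route as the paper: the paper's proof also invokes Corollary~\cort{stationary relation 1} (via \rem{stationary relation 1}, since (a3) fails), drops the routing term, uses PASTA and the independence of the arriving batch from the pre-arrival state, and identifies the batch-departure rates to get \eq{simple4}. Your write-up merely fills in the bookkeeping the paper calls ``easy to see,'' and your rate identity $\lambda^{d}_{i}K_i=\lambda\,\ee G_i$ is the correct form of the (somewhat garbled) relation stated in the paper's proof.
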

\begin{proof}
In view of \rem{stationary relation 1},
and after using PASTA,
Theorem \ref{thm:batchservice} is a special case of \eq{stationary relation 2} of Corollary \cort{stationary relation 1} in which there is no routing, the external arrival batch $\vc{Y}(t^{e}_{n})$ is independent of $\sr{F}_{t^{e}_{n}-}$ and the departing batch size $Y_{i}$ from queue $i$ is some constant $K_{i}$. In this case, it is easy to see that $\lambda^{d}_{i} \dd{E}[Y_{i}] = \lambda^{e}/K_{i}$, and we obtain \eq{simple4} from \eq{stationary relation 2}.
\end{proof}

\noindent
{\em Case $3$: Non-preemptive priority queues}
\\
In this example we consider a non-preemptive priority queue with $P$ customer classes.
We first verify the equality between the PGFs as given by Theorem~\ref{thm:batcharrival}
for $P=2$, and subsequently point out how one may use the theorem to obtain the steady-state joint queue-length distribution in that example for a $P$-class queue.

Consider the $M/G/1$ queue with $P$ classes of customers, with nonpreemptive priority in descending order $1,2,\dots,P$
(so Class $1$ has the highest priority).
Let $\lambda_i$ denote the arrival rate of customers of class $i$, $i=1,2$.
Takagi (\cite{Takagi}, Formula (2.87) on p. 311) presents the PGF $\Pi(z_1,z_2,\dots,z_P)$ of the steady-state joint queue-length distribution
immediately after an arbitrary customer departure epoch.
For $P=2$ he also obtains the PGF $P(z_1,z_2,\dots,z_P)$ of the steady-state joint queue-length distribution at an arbitrary epoch
(\cite{Takagi}, Formula (5.82b) on p. 397).
We have verified that, indeed, for $P=2$ classes one has (cf.\ Theorem~\ref{thm:batcharrival} with single arrivals),
\[
 (\lambda_1(1-z_1) + \lambda_2(1-z_2)) P(z_1,z_2) =
\lambda_1(1-z_1) S^c_1(z_1,z_2)
+ \lambda_2(1-z_2) S^c_2(z_1,z_2) .
\]
The starting point for this verification was the obvious set of relations,
with $\beta_i(z_1,z_2)$ the PGF of the numbers of arrivals at both queues during one service of a class-$i$ customer, $i=1,2$:
\begin{align}
\Pi_1(z_1,z_2) &:= \frac{\lambda_1}{\lambda} S^c_1(z_1,z_2) = \frac{\Pi(z_1,z_2)-\Pi(0,z_2)}{z_1} \beta_1(z_1,z_2) + \Pi(0,0) \frac{\lambda_1}{\lambda}
\beta_1(z_1,z_2),
\label{Pi1}
\\
\Pi_2(z_1,z_2) &:= \frac{\lambda_2}{\lambda} S^c_2(z_1,z_2) = \frac{\Pi(0,z_2)-\Pi(0,0)}{z_2} \beta_2(z_1,z_2) + \Pi(0,0) \frac{\lambda_2}{\lambda}
\beta_2(z_1,z_2) .
\label{Pi2}
\end{align}
Here $\Pi_i(z_1,z_2)$ is the PGF of the steady-state joint queue-length distribution immediately after
the departure of a class-$i$ customer, with indicator function $1({\rm departing ~ customer ~ is ~ of ~ class} ~ i)$, $i=1,2$,
and $\Pi(z_1,z_2)$ is as defined above.
The factors $\frac{\lambda_i}{\lambda}$ in the lefthand side of (\ref{Pi1}) and (\ref{Pi2})
are needed because the $S^c_i(z_1,z_2)$ are conditional PGFs, the condition being that the departing customer is of class $i$.

This example clearly demonstrates the value of our general balance equations. Besides providing a much shorter proof for
Takagi's Formula (5.82b), it also allows us to extend his result to the case of $P (> 2)$ customer classes,
by using the expressions for $\frac{\lambda_i}{\lambda} S^c_i(z_1,z_2,\dots,z_P)$ that follow
from Takagi's Formula (2.87) for $\Pi(z_1,z_2,\dots,z_P)$.
\\

\noindent
{\em Case $4$: Priority for the longer queue}
\\
Consider a model of one server and two queues.
Each queue has its own Poisson arrival process and service time distribution.
After a service completion, the server proceeds with a customer from the longest queue, if the queue lengths are unequal;
if the queue lengths are equal, the server chooses a customer from queue $Q_i$ with probability $\alpha_i$, $i=1,2$.
Cohen \cite{Cohen87} has derived the PGF $\Pi(z_1,z_2) = \ee[z_1^{X_1}z_2^{X_2}]$ of the steady-state joint queue-length distribution
immediately after an arbitrary customer departure epoch, by solving a Riemann-type boundary value problem.
In the process, he also obtained the following PGFs, that naturally arise in this {\em Priority for the longer queue} model:
$\ee[z_1^{X_1}z_2^{X_2} 1_{\{X_1>X_2\}}]$,
$\ee[z_1^{X_1}z_2^{X_2} 1_{\{X_1<X_2\}}]$,
and
$\ee[z_1^{X_1}z_2^{X_2} 1_{\{X_1=X_2 >0\}}]$.
Below we first show how one can obtain the PGFs $\Pi_i(z_1,z_2)$ of the steady-state joint queue-length distribution immediately after the departure
of a customer from $Q_i$, $i=1,2$ (we stick as much as possible to the notation of Case $3$).
By considering the joint queue-length distribution at two consecutive departure epochs,
and with $\beta_i(z_1,z_2)$ denoting the PGF of the numbers of arrivals at both queues during one service
of a customer from $Q_i$, we can write:
\begin{align}
\Pi_1(z_1,z_2) &=
\ee[z_1^{X_1} z_2^{X_2} 1_{\{X_1>X_2\}}] \frac{\beta_1(z_1,z_2)}{z_1}
\nonumber
\\
&+ \alpha_1
\ee[z_1^{X_1} z_2^{X_2} 1_{\{X_1=X_2 >0\}}] \frac{\beta_1(z_1,z_2)}{z_1} +
\pp(X_1=X_2=0) \frac{\lambda_1}{\lambda_1+\lambda_2}  \beta_1(z_1,z_2),
\\
\Pi_2(z_1,z_2) &=
\ee[z_1^{X_1} z_2^{X_2} 1_{\{X_1<X_2\}}] \frac{\beta_2(z_1,z_2)}{z_2}
\nonumber
\\
&+ \alpha_2
\ee[z_1^{X_1} z_2^{X_2} 1_{\{X_1=X_2 >0\}}] \frac{\beta_2(z_1,z_2)}{z_2} +
\pp(X_1=X_2=0) \frac{\lambda_2}{\lambda_1+\lambda_2}  \beta_2(z_1,z_2).
\end{align}
The queue-length PGFs in the two righthand sides are derived by Cohen \cite{Cohen87}, and thus we obtain
$\Pi_i(z_1,z_2)$, $i=1,2$.
This immediately leads to $S^c_i(z_1,z_2)$, $i=1,2$, as in Case $3$.
Subsequently, Theorem~\ref{thm:batcharrival} gives the PGF of the steady-state joint queue-length distribution at an arbitrary epoch.
It should be noticed that it is not at all easy to obtain this PGF in another way, for this non-Markovian model;
the {\em Priority for the longer queue} model is a difficult queueing model.
In the case of exponential service time distributions,
with equal arrival and service rates at the two queues and $\alpha_1=\alpha_2$,
Zheng and Zipkin \cite{ZZ} present a recursive method
to obtain this PGF, while Flatto \cite{Flatto} for this case (but allowing preemption) obtains the queue-length PGF by solving a boundary value problem.
\\

\noindent
{\em Case $5$: A simple network}
\\
Consider a network of $m$ service facilities, with independent
external Poisson arrival processes, 
and with continuous service time distributions.
We have Markovian routing, a customer moving from $Q_i$ to $Q_k$ with probability $p_{ik}$
and leaving the system after its service completion in $Q_i$ with probability $p_{i0}$, $i,k \in J$.
Define $\Lambda_i$ as the total flow through $Q_i$ per time unit, $i \in J$;
these $\Lambda_i$ are the unique solution of the set of equations
\begin{equation}
\Lambda_i = \lambda_i + \sum_{k=1}^m \Lambda_k p_{ki} , ~~~ i \in J.
\end{equation}
Let $A_i$ indicate that the system is viewed just before an arrival at $Q_i$,
$D_i$ that the system is viewed just after a departure from $Q_i$, and $I_{ik}$ that the system is viewed
just after a departure from $Q_i$ and just before the arrival of the departing customer at $Q_k$.
Letting $\vc{j} = (j_{1}, j_{2}, \ldots, j_{m})$,
one can write down the following balance equations for the queue length vector
$\vc{X} = (X_{1}, X_{2}, \ldots, X_{m})$:
\begin{align}
& \sum_{i=1}^m \lambda_i \dd{P}(\vc{X} = \vc{j}|A_i) + \sum_{i=1}^m \Lambda_i p_{i0} \dd{P}(\vc{X} = \vc{j} - \vc{e}_{i}|D_i)
+ \sum_{i=1}^m \sum_{k=1}^m \Lambda_i p_{ik} \dd{P}(\vc{X} = \vc{j} - \vc{e}_{i}|I_{ik}) \nonumber
\\
&=
\sum_{i=1}^m \Lambda_i p_{i0} \dd{P}(\vc{X} = \vc{j}|D_i)
+\sum_{i=1}^m \lambda_i \dd{P}(\vc{X} = \vc{j} - \vc{e}_{i} |A_i)
+
\sum_{i=1}^m \sum_{k=1}^m \Lambda_i p_{ik} \dd{P}(\vc{X} = \vc{j} - \vc{e}_{k}|I_{ik}).
\label{eq:balnetwork}
\end{align}

The (PGF of the) probabilities,
given that we observe just after a real departure from $Q_i$ or that we observe
just after a departure from $Q_i$ that will in an instant result in an arrival at $Q_k$,
are obviously the same.
If one takes PGFs, one quickly sees that a special case of \eq{stationary relation 5} is obtained.

We now use \eq{balnetwork} to provide an alternative proof for the joint queue-length distribution in a queueing network with a single roving server as studied in \cite{BMW,SLF}.
Again consider a network of $m$ queues with Markovian customer routing, as described above.
In this particular example, we assume that a \emph{single} server visits the queues in a fixed, cyclic order, requiring a switch-over time $S_i$ to move from $Q_i$ to $Q_{i+1}$.
We do not make any assumptions regarding the service disciplines at each queue. This model, which can be regarded as a polling model with customer routing, has been studied by Sidi, Levy and Fuhrmann \cite{SLF} who refer to this model as a queueing network with a roving server. Sidi et al. obtain the joint queue-length distribution at arbitrary moments, as well as the joint queue-length distribution at departure epochs. The waiting-time distributions are obtained in a different paper \cite{BMW}. For us, it is slightly more convenient to refer to this latter paper in the analysis below, because the authors in \cite{BMW} use the same definition of $V^c_i(\bz)$, the PGF of the joint queue length at departure epochs, just after a departure from $Q_i$ and just \emph{before} the arrival of the departing customer at the next queue.

Take the formulas (3.2)--(3.6) of \cite{BMW}. From (3.2), which is the counterpart of our (\ref{eq:equilibrium}), one can express (in the notation of the present paper)
the differences of PGFs at visit beginning and visit completion epochs into those at service beginning
and service completion epochs:
\begin{equation}
\label{eq:equilibriuma}
\frac{ V^b_i(\bz) - V^c_i(\bz)}{\Lambda_i\ee C} =  S^b_i(\bz) - S^c_i(\bz) P_i(\bz), \quad i=1,2,\dots,m.
\end{equation}
Here $P_i(\bz) := p_{i0} + \sum_{k=1}^m p_{ik} z_k$, and $\ee C=s/(1-\rho)$ with $\rho:=\sum_{i=1}^m\Lambda_i b_i$.
Next use our relation (\ref{eq:relation}) to express $S^b_i(\bz)$ into $S^c_i(\bz)$.
Subsequently express $L(\bz)$, in (3.4) of \cite{BMW}, which is the counterpart of (\ref{eq:QL}) above,
in differences $V^b_i(\bz) - V^c_i(\bz)$, as was also done in \cite{BKK}.
This gives
\begin{equation}
\sum_{i=1}^m \lambda_i(1-z_i) L(\bz) = \sum_{i=1}^m \Lambda_i (P_i(\bz) - z_i) S^c_i(\bz).
\label{eq:roving}
\end{equation}
This is indeed in agreement with \eq{balnetwork}:
the LHS of (\ref{eq:roving}) gives the first and the fifth term in~\eq{balnetwork}.
The last term in the RHS gives the second plus the third term in \eq{balnetwork},
once we realize that $p_{i0}+\sum_{k=1}^m p_{ik} = 1$, and that the conditional
probabilities both refer to a service completion in $Q_i$, no matter whether the condition is $D_i$ or $I_{ik}$.
The first term in the RHS gives the fourth plus fifth term in \eq{balnetwork}.
One could argue that some results in \cite{BMW} and \cite{SLF} could have been derived faster by starting from
(\ref{eq:roving}).

\section{Concluding remarks}
\label{sect:concluding}

This paper derives a distributional relationship, at different embedded epochs, for analyzing queues and their networks. As shown in \sectn{formal}, it has different forms according to the abstraction level of the model.
This may both lead to new results and easier derivations of some known results.
In Section~\ref{sec:3} this is demonstrated for a few examples.

The relationship in \sectn{non stationary} has a different nature than the rest of this paper because it does not require any stationarity of the processes of interest. Namely, it suggests that such an asymptotic relationship may enable us to obtain queueing characteristics with some error bounds, not assuming any stationarity condition. This is completely different from the standard analysis in queueing theory. Thus, it would be interesting to see whether it can yield useful results for the performance evaluation of queueing models. We leave this for future studies.

\appendix
\normalsize
\section*{Appendix}

In the appendices below, we omit ``a.s.'' because countably many events each of which occurs w.p. 1 simultaneously occur w.p. 1.

\section{Proof of \lem{stationary 1}}
\label{app:stationary 1}

Since the proofs of \eq{stationary f1} and \eq{stationary g1} are similar, we only prove \eq{stationary f1}. Since $\pi^{e}$ is a probability distribution, we can choose a sufficiently large $a$ for each $\epsilon > 0$ such that
\begin{align*}
  \sum_{\max(|\vc{x}|,|\vc{y}|) \ge a} \pi^{e}(\vc{x},\vc{y}) < \epsilon.
\end{align*}
Let $\sr{S}_{a} = \{(\vc{x},\vc{y}) \in \dd{Z}_{+}^{2m}; max(|\vc{x}|,|\vc{y}|) < a\}$, then $\sr{S}_{a}$ is a finite set. Hence, summing both sides of \eq{stationary 1} for $(\vc{x},\vc{y}) \in \sr{S}_{a}$ yields
\begin{align*}
 \lim_{n \to \infty} \frac 1n \sum_{\ell=1}^{n} \sum_{(\vc{x},\vc{y}) \in \sr{S}_{a}} 1(\vc{X}(t^{e}_{\ell}-) = \vc{x}, \Delta \vc{N}^{e}(t^{e}_{\ell}) = \vc{y}) = \sum_{(\vc{x},\vc{y}) \in \sr{S}_{a}} \pi^{e}(\vc{x},\vc{y}),
\end{align*}
and therefore
\begin{align}
\label{eq:tail 1}
  \lim_{n \to \infty} \frac 1n \sum_{\ell=1}^{n} \sum_{(\vc{x},\vc{y}) \not\in \sr{S}_{a}} & 1(\vc{X}(t^{e}_{\ell}-) = \vc{x}, \Delta \vc{N}^{e}(t^{e}_{\ell}) = \vc{y}) \nonumber \\
  & = 1 - \lim_{n \to \infty} \frac 1n \sum_{\ell=1}^{n} \sum_{(\vc{x},\vc{y}) \in \sr{S}_{a}} 1(\vc{X}(t^{e}_{\ell}-) = \vc{x}, \Delta \vc{N}^{e}(t^{e}_{\ell}) = \vc{y}) \nonumber \\
  & = 1 - \sum_{(\vc{x},\vc{y}) \in \sr{S}_{a}} \pi^{e}(\vc{x},\vc{y}) = \sum_{\max(|\vc{x}|,|\vc{y}|) \ge a} \pi^{e}(\vc{x},\vc{y}) < \epsilon.
\end{align}
Multiplying both sides of \eq{stationary 1} by $g(\vc{x},\vc{y})$ and summing them for $(\vc{x},\vc{y}) \in \sr{S}_{a}$ yields
\begin{align*}
 \lim_{n \to \infty} \frac 1n \sum_{\ell=1}^{n} \sum_{(\vc{x},\vc{y}) \in \sr{S}_{a}} g(\vc{x},\vc{y}) 1(\vc{X}(t^{e}_{\ell}-) = \vc{x}, \Delta \vc{N}^{e}(t^{e}_{\ell}) = \vc{y}) = \sum_{(\vc{x},\vc{y}) \in \sr{S}_{a}} g(\vc{x},\vc{y}) \pi^{e}(\vc{x},\vc{y}).
\end{align*}
Let $\|g\| = \sup_{\vc{x},\vc{y}} g(\vc{x},\vc{y})$, which is finite by the assumption. Since \eq{tail 1} implies that
\begin{align*}
 & \limsup_{n \to \infty} \frac 1n \sum_{\ell=1}^{n} \sum_{(\vc{x},\vc{y}) \not\in \sr{S}_{a}} g(\vc{x},\vc{y}) 1(\vc{X}(t^{e}_{\ell}-) = \vc{x}, \Delta \vc{N}^{e}(t^{e}_{\ell}) = \vc{y}) \\
 & \hspace{5ex} \le \|g\| \limsup_{n \to \infty} \frac 1n \sum_{\ell=1}^{n} \sum_{(\vc{x},\vc{y}) \not\in \sr{S}_{a}} 1(\vc{X}(t^{e}_{\ell}-) = \vc{x}, \Delta \vc{N}^{e}(t^{e}_{\ell}) = \vc{y}) < \|g\| \epsilon,\\
 & \sum_{(\vc{x},\vc{y}) \not\in \sr{S}_{a}} g(\vc{x},\vc{y}) \pi^{e}(\vc{x},\vc{y}) < \|g\| \epsilon,
\end{align*}
we have
\begin{align*}
 \limsup_{n \to \infty} \Big|\frac 1n \sum_{\ell=1}^{n} \sum_{\vc{x},\vc{y}} g(\vc{x},\vc{y}) 1(\vc{X}(t^{e}_{\ell}-) = \vc{x}, \Delta \vc{N}^{e}(t^{e}_{\ell}) = \vc{y}) - \sum_{\vc{x},\vc{y}} g(\vc{x},\vc{y}) \pi^{e}(\vc{x},\vc{y})\Big| < 2 \|g\| \epsilon.
\end{align*}
Letting $\epsilon \downarrow 0$, we arrive at \eq{stationary f1}.

\section{Proof of \lem{single 1}}
\label{app:single 1}

In view of \lem{stationary 1}, to prove (\ref{eq:stationary local 1}) it suffices to prove that, for $A \in 2^{J} \setminus \{\emptyset\}$,
\begin{align}
\label{eq:stationary local 3}
 & \lim_{n \to \infty} \frac 1n \sum_{\ell=1}^{n} 1(\vc{X}(t^{d}_{A,\ell}-) = \vc{x}, \Delta \vc{N}^{d}(t^{d}_{A,\ell})= \vc{y}, \Delta \vc{N}^{r}(t^{d}_{A,\ell}) = \vc{z}) = \pi^{d}_{A}(\vc{x},\vc{y},\vc{z}).
\end{align}
It follows from v) that, for each $i \in J, \ell \ge 1, \vc{y} \in S_{A}, \vc{z} \in \dd{Z}_+^m$, there is a unique $k \ge 1$ such that $\ell \le k$ and
\begin{align*}
  1(\vc{X}(t^{d}_{A,\ell}-) = \vc{x}, & \Delta \vc{N}^{d}(t^{d}_{A,\ell}) = \vc{y}, \Delta \vc{N}^{d}(t^{d}_{A,\ell}) = \vc{z}) \nonumber\\
 & = 1(\vc{X}(t^{d}_{k}-) = \vc{x}, \Delta \vc{N}^{d}(t^{d}_{k}) = \vc{y}, \Delta \vc{N}^{d}(t^{d}_{k}) = \vc{z}),
\end{align*}
and \eq{rate 1} and \eq{rate 2} imply
\begin{align*}
  \lim_{t \to \infty} \frac {\tilde{N}^{d}_{A}(t)} {|\tilde{\vc{N}}^{d}|(t)} = \lim_{t \to \infty} \frac {\frac 1t \tilde{N}^{d}_{A}(t)} {\frac 1t |\tilde{\vc{N}}^{d}|(t)} = \frac {\lambda^{d}_{A}} {\lambda^{d}}.
\end{align*}
Hence, for $\vc{y} \in S_{A}$,
\begin{align*}
 \pi^{d}(\vc{x},\vc{y},\vc{z}) & = \lim_{n \to \infty} \frac 1n \sum_{k=1}^{n} 1(\vc{X}(t^{d}_{k}-) = \vc{x}, \Delta \vc{N}^{d}(t^{d}_{k}) = \vc{y},\Delta \vc{N}^{r}(t^{d}_{k}) = \vc{z}) \\
 & = \lim_{t \to \infty} \frac 1{|\tilde{\vc{N}}^{d}|(t)} \sum_{k=1}^{|\tilde{\vc{N}}^{d}|(t)} 1(\vc{X}(t^{d}_{k}-) = \vc{x}, \Delta \vc{N}^{d}(t^{d}_{k}) = \vc{y}, \Delta \vc{N}^{r}(t^{d}_{k}) = \vc{z}) \\
 & = \lim_{t \to \infty} \frac {\tilde{N}^{d}_{A}(t)}{|\tilde{\vc{N}}^{d}|(t)} \frac 1{\tilde{N}^{d}_{A}(t)} \sum_{\ell=1}^{\tilde{N}^{d}_A(t)} 1(\vc{X}(t^{d}_{A,\ell}-) = \vc{x}, \Delta \vc{N}^{d}(t^{d}_{A,\ell}) = \vc{y},\Delta \vc{N}^{r}(t^{d}_{A,\ell}) = \vc{z})\\
 & = \frac {\lambda^{d}_{A}} {\lambda^{d}} \lim_{n \to \infty}\frac 1n \sum_{\ell=1}^{n} 1(\vc{X}(t^{d}_{A,\ell}-) = \vc{x}, \Delta \vc{N}^{d}(t^{d}_{A,\ell}) = \vc{y},\Delta \vc{N}^{r}(t^{d}_{A,\ell}) = \vc{z}).
\end{align*}
This proves \eq{stationary local 3} by the definition $\pi^{d}_{A}$, and therefore (\ref{eq:stationary local 1}) holds. The fact that $\pi^{d}_{A}$ is a probability distribution is immediate from \eq{stationary local 1} with $h(\vc{x},\vc{y},\vc{z}) \equiv 1$.
\\

\noindent
{\bf Acknowledgment}. We are grateful to a referee for providing useful references and insightful remarks.


\begin{thebibliography}{99}
\bibitem{BaccBrem2003}
F. Baccelli and P. Br{\'e}maud (2003).
\newblock \textit{Elements of queueing theory: Palm martingale calculus and
  stochastic recurrences}, vol.~26 of \textit{Applications of Mathematics}.
\newblock 2nd ed. Springer, Berlin.

\bibitem{BB}
S.C. Borst and O.J. Boxma (1997).
Polling models with and without switchover times.
{\em Oper. Res.} {\bf 45}, 536-543.

\bibitem{BKK}
O.J. Boxma, O. Kella and K.M. Kosinski (2011).
Queue lengths and workloads in polling systems.
{\em Oper. Res. Letters} {\bf 39}, 401-405.

\bibitem{BMW}
M.A.A. Boon, R.D. van der Mei and E.M.M. Winands (2013).
Waiting times in queueing networks with a single shared server.
{\em Queueing Systems} {\bf 74}, 403-429.

\bibitem{BT}
O.J. Boxma and T. Takine (2003).
The $M/G/1$ FIFO queue with several customer classes.
{\em Queueing Systems} {\bf 45}, 185-189.

\bibitem{Cohen87}
J.W. Cohen (1987).
A two-queue, one-server model with priority for the longer queue.
{\em Queueing Systems} {\bf 2}, 261-283.

\bibitem{Cooper}
R.B. Cooper (1972).
{\em Introduction to Queueing Theory}.
Macmillan, New York.

\bibitem{Eisenberg}
M. Eisenberg (1972).
Queues with periodic service and changeover time.
{\em Oper. Res.} {\bf 20}, 440-451.

\bibitem{Fakinos}
D. Fakinos (1991).
The relation between limiting queue size distributions at arrival and departure epochs in a bulk queue.
{\em Stochastic Processes and their Applications} {\bf 37}, 327-329.

\bibitem{Flatto}
L. Flatto (1989).
The longer queue model.
{\em Prob. Eng. Inform. Sci.} {\bf 3}, 537-559.

%
%
\bibitem{Hebuterne}
G. H\'ebuterne (1988).
Relation between states observed by arriving and departing customers in bulk systems.
{\em Stochastic Processes and their Applications} {\bf 27}, 279-289.

\bibitem{HebuterneRosenberg}
G. H\'ebuterne and C. Rosenberg (1999).
Arrival and departure state distributions in the general bulk-service queue.
{\em Naval Research Logistics Quarterly} {\bf 46}, 107-118.

\bibitem{Kim}
K. Kim (2015),
A relation between queue-length distributions during server vacations in queues with batch
arrivals, batch services, or multiclass arrivals: An extension of Burke's theorem.
{\em Indian Journal of Science and Technology} {\bf 8}, 1-5.

\bibitem{Miya1994}
M. Miyazawa (1994) Rate conservation laws: a survey.
{\em Queueing Systems} {\bf 15}, 1-58.

\bibitem{Miya2010}
M. Miyazawa (2010).
\newblock \textit{Palm calculus, reallocatable GSMP and insensitivity
  structure}, chap. 4 of Queueing networks: A fundamental approach.
\newblock International Series in Operations Research and Management Science,
  Springer, 141--215.

\bibitem{PapaBertsimas}
X. Papaconstantinou and D. Bertsimas (1990).
Relations between the pre-arrival and the post-departure state probabilities and the FCFS waiting time distribution
in the $E_k/G/s$ queue.
{\em Naval Research Logistics} {\bf 37}, 135-149.


\bibitem{SLF}
M. Sidi, H. Levy and S.W. Fuhrmann (1992).
A queueing network with a single cyclically roving server.
{\em Queueing Systems} {\bf 11}, 121-144.

\bibitem{Takagi}
H. Takagi (1991).
Queueing Analysis.
A foundation of performance evaluation.
Volume 1: Vacation and priority systems.
{\em North-Holland Publ. Cy., Amsterdam}.

\bibitem{Takine}
T. Takine (2001).
Distributional form of Little's law for FIFO queues with multiple Markovian arrival streams and its applications to queues
with vacations.
{\em Queueing Systems} {\bf 37}, 31-63.

\bibitem{ZZ}
Y.-S. Zheng and P. Zipkin (1990).
A queueing model to analyze the value of centralized inventory information.
{\em Oper. Res.} {\bf 38}, 296-307.

\end{thebibliography}
\end{document}